\newtheorem{df}{Def}[section]  
\newtheorem{teo}[df]{Theorem}
\newtheorem{cor}[df]{Corollary}
\newtheorem{conv}[df]{Convention}
\newcommand{\eloise}{\exists \text{loise}}
\newcommand{\abelard}{\forall \text{belard}}
\author{Fausto Barbero}
\title{On existential declarations of independence in IF Logic}
\date{}
\begin{document}

\setcounter{page}{1}

\maketitle

\begin{abstract}
We analyze the behaviour of declarations of independence between existential quantifiers in quantifier prefixes of IF sentences; we give a syntactical criterion for deciding whether a sentence beginning with such prefix exists such that its truth values may be affected by removal of the declaration of independence. We extend the result also to equilibrium semantics values for undetermined IF sentences.

The main theorem allows us to describe the behaviour of various particular classes of quantifier prefixes, and to prove as a remarkable corollary that all existential IF sentences are equivalent to first-order sentences.

As a further consequence, we prove that the fragment of IF sentences with knowledge memory has only first-order expressive power.
\end{abstract}

%\Keywords{IF Logic, Independence-Friendly Logic, equilibrium semantics, existential IF sentences, declarations of %independence}

\section*{Introduction and overview} 
\begin{comment}
Existential declarations of independence occurring in the slash set of an existential quantifier may in some cases be irrelevant, by which we mean that removing the declaration of independence leads from some formula to an equivalent formula. A well-known example is this:
\begin{equation}   \label{JAN}
\exists x \exists y/x(x=y)
\end{equation}
which is equivalent on all models to
\[  
\exists x \exists y(x=y)
\]
($\rightarrow$ Janssen 2002).\\
One may be tempted to think that this is always the case, as in Skolem semantics for first-order logic existential quantifiers never play a relevant role. Instead, we have counterexamples($\rightarrow$ Mann, workshop):
\[
\forall x\exists y\exists z/x,y(x=z) 
\]
is UNDETERMINED on all structures with at least two elements, while Hodges' formula 
\[
\forall x\exists y\exists z/x(x=z) 
\]
is TRUE on all structures . \\
Problem: when does each of these phenomena occur? Finding characterizations (in sintactycal, game theoretical, model theoretical terms?)\\
\end{comment}

Independence-Friendly (IF) logic is perhaps, among logics of imperfect information that have been introduced so far, the most suitable for expressing the game theoretical content of semantics (see \cite{HinSan89} and \cite{Hin96} for early presentations, and \cite{ManSanSev2011} for a more formal treatment). Its syntax extends common first-order logic introducing a new slash symbol that allows stating independence between quantifications; as an example, sentence
\[
\forall x\exists y\forall z(\exists w/\{x,y\})P(x,y,z,w)
\]
is true in the same models as the second-order existential sentence
\[
\exists f\exists g\forall x\forall zP(x,f(x),z,g(z))
\]
The intuitive reading of $(\exists w/\{x,y\})$ is ''there exists a w, independent of x and y''. We call $\{x,y\}$ the \emph{slash set of $w$}.

As first-order truth in a model can be characterized as the existence of a winning stategy (for a player that we shall call $\eloise$) in a certain semantic game, the same can be done for the new IF formulas by means of well-crafted games of imperfect information. Slash sets can be read as instructions for defining information sets in semantic games; once these have been introduced, the notion of strategy is restricted to functions that are constant over all information sets. 

It has been observed by Janssen (\cite{Jan2002}), however, that IF syntax allows expressing some declarations of independence which have no effect at all on the spectrum of truth values of a formula. The standard example is given by the sentence
\begin{equation}   \label{JAN}
\exists x (\exists y/\{x\})(x=y)
\end{equation}
which seems to assert that two agents may choose the same element from the domain being each unaware of the other's choice (an act which should be impossible over structures with two elements or more); yet, it is equivalent under IF semantics to 
\[  
\exists x\exists y(x=y)
\]
which is true in all structures.\footnote{We believe that the impossibility of formalizing correctly the intuition that underlies formula \eqref{JAN} is related to the difficulties that arise when we try to adapt standard notions of game theory to games of imperfect recall; see section \ref{GTS} for a brief presentation of this notion.}
What happens is that a winning strategy for \eqref{JAN} simply consists of two equal constants ($0$-ary functions). So, the game-theoretical structure of imperfect information has no effect on the truth values of \eqref{JAN}.

One may be tempted, now, to think that this kind of phenomenon arises anytime we have a declaration of independence between two existential quantifiers. The experience we have with first-order logic suggests that existentially quantified variables should never play the role of dependence variables. But it is known that this is not what happens in IF logic. Consider as an example\footnote{As exposed by Allen L.Mann at the \emph{Workshop on Dependence and Independence in Logic}, ESSLLI 2010.}
\begin{equation}   \label{REL}
\forall x\exists y(\exists z/\{x,y\})(x=z). 
\end{equation}
This formula is \emph{undetermined} on all structures with at least two elements. Instead, if we remove the declaration of independence between the existential quantifiers, we obtain Hodges' signalling formula (\cite{Hod97}) 
\[
\forall x\exists y(\exists z/\{x\})(x=z) 
\]
which is \emph{true} on all structures. (The value of $x$ may be copied in $y$, which in turn can be seen by $z$. We say that $y$ \emph{signals} the value of $x$ to $z$). 

As a third example, notice that formulas
\begin{equation}   \label{IRRWITHA}
\forall x(\exists y/\{x\})(\exists z/\{x,y\})(x=z) 
\end{equation}
and
\[
\forall x(\exists y/\{x\})(\exists z/\{x\})(x=z) 
\]
are strongly equivalent (they are both undetermined on all structures with at least two elements). This should subside the suspect that declarations of independence may be irrelevant only in existential formulas. What happens, here, is that the information generated by $\abelard$ for the universal quantifier is not accessible to $z$ even by means of signalling. 

The main purpose of the present paper will be that of classifying the behaviour of declarations of independence between existential quantifiers; we will provide a characterization theorem, of a game-theoretical yet syntactical nature, that will allow us to decide whether a declaration of independence behaves as in \eqref{JAN} and \eqref{IRRWITHA}, or instead as in \eqref{REL}.

To be more precise, we will not try to characterize what impact the declaration of independence has on the truth values of a fixed formula; we shall rather fix a quantifier sequence and investigate whether a sentence can be found which begins with such quantifier prefix and whose truth values are influenced by removal of the declaration of independence. This further degree of freedom will be needed in order to choose formulas which inhibit most signalling possibilities. 

In section \ref{IFL} we shall fix a syntax for IF logic and review its game-theoretical semantics and the game-theoretical terminology related with it. Section \ref{GTS} reviews briefly the notions of perfect recall, action recall and knowledge memory, which will be sometimes referred to, mainly for reasons of analogy. The main characterization result appears in section \ref{TCR}; some intermediate results, which do not take in account yet all signalling possibilities, are presented in section \ref{TOW}. In section \ref{SOMECON} some corollaries for particular classes of IF formulas are drawn from the main theorem.
Section \ref{EQU} extends the main result to the so-called \emph{equilibrium semantics} for IF logic, showing that probabilistic values for undetermined formulas behave as the main values \emph{true} and \emph{false}. 
In section \ref{SYNTREE} we generalize in a different direction, showing that the main result can be applied also to quantifier sequences which are not prenex. These considerations are not stated in full generality; rather, they are functional to the proof of the theorem which is presented in section \ref{KMIF}. There we prove that the fragment of IF logic with knowledge memory has first-order expressive power.

\section{IF logic} \label{IFL}
We mainly refer here to \cite{ManSanSev2011}, but our syntax slightly differs in that we do not allow function symbols in our formulas. This will help reduce the complexity of some proofs. Doing that, we are relying on the hypothesis that adding function symbols does not increase the expressive power of IF logic. (As far as we know, the problem has not been addressed in the current literature. We think that the situation should be analogous to that of first-order logic; but, since properties of IF logic often happened to defy intuitive considerations, we shall not make here any incautious guess).

Also, we shall restrict our attention to the so called \emph{regular formulas}; this is justified by the results of \cite{CaiDecJan2009} which show the strong equivalence of each IF formula with some regular formula. 
\begin{df}
A string $\varphi$ is an \textbf{IF formula} if it satisfies one of the following clauses: \\
1) $\varphi$ is a first-order atomic formula (without occurrences of function symbols).\\
2) $\varphi$ is $\sim\hspace{-4pt}\psi$, $\psi$ being a first-order atomic formula (without occurrences of function symbols).\\
3) $\varphi$ is $\psi \land \chi$ or $\psi \lor \chi$, $\psi$ and $\chi$ being IF formulas.\\
4) $\varphi$ is $(\exists v/V)\psi$ or $(\forall v/V)\psi$, $\psi$ being an IF formula, $v$ being a variable symbol and $V$ being a finite set of variable symbols (\emph{''slash set''}). We simply write $\exists v\psi$ or $\forall v\psi$ when the slash set is empty. 
\end{df}  

\begin{df}
The set of \textbf{free variables} of an IF formula $\varphi$, which we call $FV(\varphi)$, is defined recursively as:\\
1) $FV(R(x_1,\dots,x_n)) = \{x_1,\dots,x_n\}$ \\
2) $FV(\psi \land \chi) = FV(\psi \lor \chi) = FV(\psi) \cup FV(\chi)$ \\
3) $FV((\exists v/V)\psi) = FV((\forall v/V)\psi) = (FV(\psi)\setminus\{v\})\cup V$.
% 4) $FV(\sim\hspace{-2pt}\psi) =FV(\psi)$ \\
\end{df}

\begin{df}
An \textbf{IF sentence} is an IF formula $\varphi$ such that $FV(\varphi) = \emptyset$.
\end{df}

Sets $Subf(\varphi)$ of subformulas are defined as usual.

\begin{df}
An IF formula $\varphi$ is \textbf{regular} if the following hold:
1) If a quantification $(Qv/V)$ occurs in $\varphi$, and $x\in V$, then $(Qv/V)$ is subordinated to (i.e., it occurs within the syntactical scope of) another quantification, which is of the form $(Q'x/X)$\\
2) If a quantification $(Qv/V)$ occurs in $\varphi$, then it is not subordinated to any quantification of the form $(Qv/W)$. 
\end{df}

\begin{conv}
Everywhere in the present paper, we shall suppose that each variable is quantified only once in the formulas and in the quantifier prefixes we are speaking about. This is justified by the result, proved in \cite{CaiDecJan2009} that each IF formula is strongly equivalent (see below) to a regular formula, together with the fact that we are dealing mainly with prenex formulas. 
\end{conv}

We introduce now the game-theoretical semantics of IF sentences. Sensible semantics can be defined also for IF formulas (see \cite{Hod97} and \cite {Hod97b}, or also \cite{ManSanSev2011}), but we will not need it here.

We associate to each IF formula $\varphi$, each assignment $s$ such that $dom(s)\supseteq FV(\varphi)$ and each suitable structure $M$ a game $G(\varphi,s,M)$ of imperfect information (in case $s=\emptyset$, we may simply write $G(\varphi,M)$). Such game is defined by its set of players, by its rules and by its information sets. We will also introduce some extra terminology, which, although somewhat redundant, will allow us to speak more easily about the games. 

\begin{df} The set of players of $G(\varphi,s,M)$ is $N = \{\eloise,\abelard\}$. The following clauses are the \textbf{rules} of  $G(\varphi,s,M)$: 
\begin{enumerate}
\item At the beginning of the game, $\eloise$ plays as a Verifier, $\abelard$ plays as a Falsifier. 
\item If $\varphi$ is an atomic formula or a negated atomic formula, then the game ends, and, if $\varphi$ is true in $M$, then the Verifier wins; otherwise, the Falsifier wins.
\item If $\varphi=\psi\land\chi$, then the Falsifier chooses one out of $\psi$ and $\chi$, and game $G(\psi,s,M)$ (resp., $G(\chi,s,M)$) is played. 
\item If $\varphi=\psi\lor\chi$, then the Verifier chooses one out of $\psi$ and $\chi$, and game $G(\psi,s,M)$ (resp., $G(\chi,s,M)$) is played.
\item If $\varphi=(\forall v/V)\psi$, then the Falsifier chooses an element $c$ of $M$, and game $G(\psi,s(c/v),M)$ is played.
\item If $\varphi=(\exists v/V)\psi$, then the Verifier chooses an element $c$ of $M$, and game  $G(\psi,s(c/v),M)$ is played.
%\item If $\varphi=\hspace{5pt}\sim\hspace{-3pt}\psi$, then Verifier and Falsifier exchange their roles, and $G(\psi,s,M)$ is %played.
\end{enumerate}
\end{df}

\begin{df}
We can define more formally the structure of the game by defining its \textbf{histories} and the \textbf{associated assignments}. The set $H = \cup\{H_\psi | \psi\in Subf(\varphi)\}$ is defined inductively. \\
1) $(s,\varphi)\in H_\varphi$. \\
2) If $\psi\circ\chi\in Subf(\varphi)$, then $H_{\psi\circ\chi}=\{h^\frown \psi | h\in H_\psi\} \cup \{h^\frown \chi | h\in H_\chi\}$ (where $\circ$ is either $\land$ or $\lor$). \\
3) If $(Qv/V)\psi\in Subf(\varphi)$, then $H_{(Qv/V)\psi}=\{h^\frown(v,m)^\frown\psi | h\in H_\psi, m\in M\}$
 ($Q$ being a quantifier). \\
%4) If $\sim\hspace{-2pt}\psi\in Subf(\varphi)$, then
For each history $h\in H$ we define the associated assignment $s_h$ as:
\[
s_h = \left\{ \begin{array}{ll}
s 	&	\text{if } h = (s,\varphi) \\
s_{h'} &	\text{if } h = h'^\frown\psi\\
s_{h'}(v/a)  &   \text{if } h = h'^\frown(v,a)  \\
\end{array} \right.
\]

\end{df}

As usual in game theory, each history may identified with a \textbf{node} of a \textbf{game tree}. Histories which end with an atomic formula or a negation of an atomic formula are called \textbf{terminal histories}, or \textbf{plays}.

\begin{df}
Two histories $h$, $h'$ are in the same \textbf{information set}, and we write $h\sim h'$, if either holds:\\
1) $h$ and $h'$ are the same history \\
2) $h,h'\in H_{(Qv/V)\psi}$ and $s_h, s_{h'}$ only differ on variables of $V$.
\end{df}

Having defined the games, now we can define the semantics of IF logic.
\begin{df}
For each history $h$ we define
\[
P(h)= \left\{ \begin{array}{ll}
\eloise & \text{if } h\in H_{\psi \lor \chi} \text{ or } h\in H_{(\exists v/V)\psi} \\
\abelard & \text{if } h\in H_{\psi \land \chi} \text{ or } h\in H_{(\forall v/V)\psi} \\
\end{array}\right.
\]
(for some $\psi,\chi$) and we set $H_\exists = \{h\in H | P(h)=\eloise\}$ and $H_\forall = \{h\in H | P(h)=\abelard\}$.\\
A \textbf{(pure) strategy} for $\eloise$ is a set of functions $f_h$, one for each $h\in H_\exists$, such that:\\
1) $dom(f_h) = dom (s_h)$.\\
2) In case $h\in H_{\psi \lor \chi}$, $f_h(\vec x) = \psi$ or $f_h(\vec x) = \chi$.\\
3) In case $h\in H_{(\exists v/V)\psi}$, then $f_h(\vec x)\in M$, and $h\sim h'$ implies $f_h(\vec x) = f_{h'}(\vec x)$. \\ 
Strategies for $\abelard$ are defined analogously. \\
A \textbf{(pure strategy) profile} for an IF game is a couple $(\sigma,\tau)$ consisting of a strategy for $\eloise$ and one for $\abelard$.
\end{df}

Notice that each profile uniquely determines a play of the game, call it $z_{\sigma\tau}$. \\ 
Given a terminal history $h$, let $A_h$ be the last component of $h$ (i.e., the atomic or negated atomic formula on which the play ends). 
%Also, we denote the last component of $z_\{\sigma\tau\}$ as $A_{\sigma\tau}$.

\begin{df}
The \textbf{utility function} for player $P$ is defined as:
\[
u_\exists(h) = \left\{ \begin{array}{ll}
1 & if M\models A_h   \\
0 & if M\not\models A_h \\
\end{array} \right.
\]
\[
u_\forall(h) = 1 - u_\exists(h).
\]
We say that a strategy $\sigma$ is \textbf{winning} for $\eloise$ if for any strategy $\tau$ of $\abelard$ we have $u_\exists(z_{\sigma\tau}) = 1$. \\ 
We define analogously winning strategies for $\abelard$.
\end{df}

\begin{df} (\textbf{Truth values})
An IF sentence $\varphi$ is \textbf{true in M} if $\eloise$ has a winning strategy for $G(\varphi,\emptyset,M)$. \\
It is \textbf{false in M} if $\abelard$ has a winning strategy for $G(\varphi,\emptyset,M)$. \\
Otherwise, we say that $\varphi$ is \textbf{undetermined in M}.
\end{df}

\begin{df}
We say that two IF sentences $\varphi,\varphi'$ are \textbf{truth} (resp. \textbf{falsity}) \textbf{equivalent}, and we write $\varphi\equiv\varphi'$, if they are true (resp. false) in the same structures.\\
We say $\varphi$ and $\varphi'$ are \textbf{strongly equivalent}, and we write $\varphi\equiv^*\varphi'$, if they are both truth and falsity equivalent.  
\end{df}

\section{A few words on game-theoretical structure} \label{GTS}
Now that we have associated an extensive game to each sentence and structure, we can look at the properties of such games. It turns out that some of these properties can be associated to the formula only, without reference to the particular structure under consideration; in some cases, they may be characterized by syntactical properties of the formula. A notable example is the property of \textbf{perfect recall}, which expresses the fact that a player never forgets his previous actions and knowledge. A game has perfect recall iff both of the following hold:
\begin{itemize}
\item \textbf{Action recall}: The player does not forget his own moves. 
\item \textbf{Knowledge memory}: The player does not forget his previous knowledge.
\end{itemize} 
It turns out that these two concepts are characterizable synctactically (\cite{Sev2006}); for IF formulas in negation normal form,
\begin{itemize}
\item $\eloise$ has action recall in $G(\varphi,M)$ iff in $\varphi$ there are no declarations of independence between existential quantifiers. 
\item $\eloise$ has knowledge memory in $G(\varphi,M)$ iff \\
 1) for every triple of existential quantifications that occur in $\varphi$, say $(\exists v/V)$ superordinated to $(\exists w/W)$ superordinated to $(\exists z/Z)$, $v\in Z$ implies $v\in W$, and \\
2) if a disjunction $\lor_i$ occurs superordinated to $(\exists z/Z)$, then $v\in Z$ implies that $v$ is not quantified superordinated to $\lor_i$.
\end{itemize}
(Clumsier characterizations can be given for generic IF formulas).\\ 
Thus, perfect recall itself is characterizable by synctactical means. So, we can speak of ''formulas of perfect recall''. It turned out that IF formulas are quite essentially formulas of \emph{imperfect} recall, meaning that all IF formulas of perfect recall are truth  equivalent to some first-order formula, and also falsity equivalent to some (possibly different) first-order formula (see \cite{Sev2006} or \cite{ManSanSev2011} for a proof).

\section{Towards a game-theoretical characterization} \label{TOW}
Here we consider a weak notion of relevance of declarations of independence.
\begin{df}
Given a formula $\varphi$ with a single occurrence of a quantifier $(\exists y/\dots x$ $\dots)$, we denote 
as $\varphi^{y\leftarrow x}$ the formula obtained from $\varphi$ by removing the declaration of independence from $x$.
(In the present paper we are only interested in the case that $x$ occurs in an $\emph{existential}$ superordinated quantification).
\end{df}
%Notice that such a quantifier is subordinate to an occurrence of a quantifier $(\exists x/X)$.

%$\overline{(Q_iv_i/W_i)}$
\begin{df}
A declaration of independence in an existential quantifier from an existential quantifier ($I\exists\exists$) 
occurring in a quantifier prefix $\vec Q$ is \textbf{relevant} if there exist a sentence $\varphi=\vec Q\psi$, and a structure $M$, such that $\varphi$ and $\varphi^{y\leftarrow x}$ have different truth values on $M$. Otherwise, it is said to be \textbf{irrelevant}.
\end{df}

Clearly, any formula containing an $I\exists\exists$ lacks action recall. We claim that relevance of the 
$I\exists\exists$ is tightly connected to the structure of knowledge memory of the formula. 

A remark: we focus on this case, and not on declarations of indipendence of existential quantifiers from universal quantifiers ($I\forall\exists$) because the latter have trivial behaviour in this context. Indeed, formula
\[
\dots(\forall x/X)\dots(\exists y/Y)\dots(x_1=0\land\dots\land x_n=0\land x=y)
\] 
(where $x\in Y$ and the $x_i$s are all the existential variables occurring in the quantifier prefix) is not true, while
\[
\dots(\forall x/X)\dots(\exists y/(Y\setminus\{x\}))\dots(x_1=0\land\dots\land x_n=0\land x=y)
\]
is. Declaration types $I\exists\forall$ and $I\forall\forall$ may be treated dually.

In the present paragraph, we will define two conditions related to knowledge memory that will be proved to be, respectively, a sufficient and a necessary condition for relevance. This will help the reader to get acquainted, in a simpler context, with some proof methods that will be applied  in the next paragraph in order to prove a characterization result. This is our candidate as a sufficient condition:  
\begin{df}
We say that an $I\exists\exists$ (say $(\exists y/Y)$, subordinated to $(\exists x/X)$ and such that $ x\in Y$), occurring in a formula $\varphi$ \textbf{breaks knowledge memory} if there is a universally quantified variable $v$, superordinated to $(\exists x/X)$, such that $v\in Y\setminus X$. 
\end{df} 
Clearly, if an $I\exists\exists$ breaks knowledge memory for a certain prenex formula $\phi$ of quantifier prefix $\vec Q$, it breaks knowledge memory for any other prenex formula with the same quantifier prefix; we may say, thus, that it breaks knowledge memory \textit{in} $\vec Q$. \\
Notice also that this is a more specific notion than ''lacking knowledge memory''; here we make reference to a specific superordinate quantifier $\exists x$. \\ 
Furthermore, specific reference to universal quantifications is needed because it is only at universal quantifications that information is generated that could happen not be available to $\exists$loise (this intuitive claim is proved in the present paper  for any quantifier prefix; of course it does not hold if we also have connectives superordinated to quantifiers).  
If we only requested that $Y\not\subset X$, there would be purely existential formulas whose quantifiers break knowledge memory and yet are not relevant. Such an example is Janssen's formula \eqref{JAN}. However, we give a name to such a condition for later usage:  
\begin{df}
We say that an $I\exists\exists$ (say $(\exists y/Y)$, subordinated to $(\exists x/X)$ and such that $ x\in Y$), occurring in a formula $\varphi$ \textbf{weakly breaks knowledge memory} if there is a variable $v$, superordinated to $(\exists x/X)$, such that $v\in Y\setminus X$.
\end{df}

Now we state and prove the sufficient condition for relevance.

\begin{teo}   \label{SUFF}
If an $I\exists\exists$ occurring in a quantifier prefix $\vec Q$  breaks knowledge memory in $\vec Q$, then it is relevant in $\vec Q$.
\end{teo}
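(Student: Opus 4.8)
The plan is to witness relevance with a single matrix $\psi$ and a single small structure, engineering $\psi$ so that the equation $y=v$ is the only thing at stake and every other existential variable is rendered useless. Write the $I\exists\exists$ as $(\exists y/Y)$ subordinated to $(\exists x/X)$ with $x\in Y$, and let $v$ be the universally quantified variable superordinated to $(\exists x/X)$ with $v\in Y\setminus X$ supplied by the hypothesis that knowledge memory is broken. Let $x_1,\dots,x_n$ be the existential variables of $\vec Q$ other than $x$ and $y$, and I would take as matrix
\[
\psi \;=\; (x_1=0)\land\dots\land(x_n=0)\land(y=v),
\]
so that $\varphi=\vec Q\,\psi$, working over any $M$ with $|M|\ge 2$ equipped with a distinguished element $0$ (the same device already used in the $I\forall\exists$ remark above). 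Since $\psi$ is quantifier-free, the $y$-node is reached on every play, and, because $\abelard$ chooses the conjunct to be tested, $\eloise$ wins a play exactly when every $x_i$ evaluates to $0$ and $y$ evaluates to $v$.

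For the easy direction I would show $\varphi^{y\leftarrow x}$ is true by exhibiting a winning strategy for $\eloise$: set each $x_i$ to $0$ by a constant function; set $x$ to the current value of $v$, which is legitimate because $v\notin X$; and set $y$ to the current value of $x$, which is legitimate in $\varphi^{y\leftarrow x}$ because there the slash set of $y$ is $Y\setminus\{x\}$, so $x$ is visible. This realizes the signalling chain $v\to x\to y$ and forces $y=v$ on every play, whatever $\abelard$ does, while the conjuncts $x_i=0$ hold trivially; hence $\varphi^{y\leftarrow x}$ is true.

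The substance is the other direction: $\varphi$ itself is not true. Suppose toward a contradiction that $\eloise$ has a winning strategy $\sigma$. Fix every universal variable other than $v$ to $0$ and compare the two configurations $v=a$ and $v=b$ for distinct $a,b\in M$. In $\varphi$ the slash set of $y$ is $Y$, which contains both $v$ and $x$, so at the $y$-node $\sigma$ may depend only on variables lying outside $Y$. Among the variables quantified before $y$, those outside $Y$ are the universals other than $v$ (pinned to $0$ in both configurations) together with the $x_i$ (the variable $x$ itself being excluded, since $x\in Y$). Because $\sigma$ is winning, each such $x_i$ must evaluate to $0$ in both configurations: otherwise $\abelard$ simply tests the conjunct $x_i=0$ and wins, and since the quantifier phase is completed before $\abelard$ selects a conjunct, the value of $x_i$ is the same whatever conjunct is later chosen. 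Thus the two $y$-nodes carry identical values on every variable outside $Y$ and so belong to the same information set; by the constancy clause in the definition of a strategy, $\sigma$ outputs one and the same value $c$ for $y$ in both. But winning forces $y=v$, whence $c=a$ and $c=b$, contradicting $a\ne b$. Therefore $\varphi$ is not true, while $\varphi^{y\leftarrow x}$ is, so the declaration is relevant.

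The main obstacle, and the reason the matrix must zero out the $x_i$ rather than merely assert $y=v$, is exactly to guarantee this information-set collapse. If some $x_i\notin Y$ were left free it could ferry the value of $v$ to $y$ along an alternative signalling path, which is precisely the phenomenon that makes $y=v$ alone insufficient (imagine a spare existential that sees $v$ and is seen by $y$); the two $y$-nodes would then no longer be indistinguishable and the final contradiction would fail. Forcing $x_i=0$ neutralizes every such path, while the single chain $v\to x\to y$, the one severed by reinstating $x$ into the slash set of $y$, survives only in $\varphi^{y\leftarrow x}$. It remains to verify the routine points that $\sigma$ respects the slash sets (constant functions and the reads of $v$ and of $x$ are each constant on the relevant information sets) and that regularity ensures $X$ and $Y$ contain only variables superordinated to $x$ and $y$ respectively, so that the dependence bookkeeping above is correct.
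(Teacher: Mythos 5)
Your argument is sound and is essentially the paper's own proof: the same witness idea (a matrix that makes $y=v$ the only live issue while neutralizing every other existential variable so it cannot ferry the value of $v$ to $y$), the same signalling chain $v\to x\to y$ to show $\varphi^{y\leftarrow x}$ true, and the same information-set collapse (the two $y$-histories differ only on variables of $Y$, hence receive the same value for $y$) to refute any winning strategy for $\varphi$. The one genuine difference is the neutralization device, and it matters for strict compliance with the paper's definitions: you equate the $x_i$ to a constant symbol $0$, but the paper's syntax admits no function symbols at all, and a constant is a $0$-ary function symbol, so $x_i=0$ is not an atomic IF formula here; since the definition of relevance quantifies over IF sentences $\vec Q\psi$, your witness is, taken literally, outside the admissible class (the $I\forall\exists$ remark you cite is an informal aside, and the paper's actual proofs carefully avoid the constant). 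The paper's substitute is to append a fresh innermost quantifier $(\exists w/W)$, with $W$ the set of all variables of $\vec Q$, and to use the matrix $v=y\land x_1=w\land\dots\land x_n=w$: since all $w$-histories lie in a single information set, any strategy of $\eloise$ assigns $w$ one fixed value, which then plays exactly the role of your $0$. Your proof is repaired by this substitution essentially verbatim --- your uniform claim ``every $x_i$ must evaluate to the distinguished value in every play consistent with $\sigma$'' carries over, at the cost of the one extra observation that $\sigma$'s choice for $w$ is forced to be constant across all plays.
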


\begin{proof} 
Suppose the prefix $\vec Q$ is of the form $\dots(\exists x/X)\dots(\exists y/Y)\dots$, where $x\in Y$. Now fix a universally quantified variable $v\in Y\setminus X$ which occurs superordinated to $(\exists x/X)$. There is at least one because our $I\exists\exists$ breaks knowledge memory. Denote the remaining 
existentially quantified variables as $x_1,\dots,x_n$. %, and the remaining universally quantified variables as $z_1,\dots,z_m$.
Let $W$ be the set of all variables occurring in $\vec Q$; let $w$ be a new variable. 
Define a sentence\footnote{Notice that this sentence imposes no constraints on universally quantified variables except for $v$. So, it cannot happen that $\abelard$ is forced to signal the value of $v$ through these variables.} 
\[
\varphi=\vec Q(\exists w/W)(v=y \land x_1=w \land \dots \land x_n=w),
\]

Let $|M|$ be a set containing at least two distinct elements. Let $M$ be any structure with domain $|M|$.

We prove that there is no winning strategy for $\exists$loise in $G(\varphi,M)$. Suppose $\sigma$ is a winning strategy. \\
Let $h$ be a terminal history that can be played as $\eloise$ plays according to $\sigma$ (remember that, once a strategy for $\eloise$ is fixed, the actual play is determined by $\abelard$'s moves only). Suppose that in $h$, for some $i$, different objects are chosen for $x_i$ and for $w$. Then, there is another terminal history $h'$ that differs from $h$ at most in that $\abelard$ chooses ''$x_i=w$'' as a last move. Since this history is also playable as $\eloise$ plays according to her supposedly winning strategy $\sigma$, and in it $\abelard$ wins, we have a contradiction.

Suppose now that, in $h$, the same value is chosen for all $x_1,\dots, x_n$ and $w$. Then, there is another terminal history $g$ that can be played as $\eloise$ plays according to $\sigma$ and which differs from $h$ at most in that $\abelard$, as a last move, chooses the leftmost conjunct $v=y$.

Suppose that $\abelard$'s choice for $v$ in $g$ is some element $e$. Since there are at least two distinct elements in $|M|$, there is another terminal history $g'$, playable according to $\sigma$, in which $\abelard$'s moves differ only in that he  chooses something else for $v$ (say $e'\neq e$). It may happen that, as a consequence, now $\sigma$ prescribes different choices for $w$ and for some $x_i$; if that happens, we are again in the former case, and we obtain a contradiction. Suppose instead that $\sigma$ prescribes the same choices for all $w,x_1,\dots,x_n$ also in $g'$. Let $A$ be the node of choice for $y$ which is reached in $g$, and let $B$ be node of choice for $y$ which is reached in $g'$. $A$ and $B$ are in the same information state, since the histories which end with them differ only in the choice for $v\in Y$. So, $\sigma$ prescribes the same move in both $A$ and $B$. But there is no choice for $y$ that leads to a victory for $\eloise$ in both $g$ and $g'$. So, $\sigma$ is not a winning strategy. 

Instead, there is at least one winning strategy for $\exists$loise in $G(\varphi^{y\leftarrow x}, M)$:  for variable $x$, choose the same value that $\abelard$ has chosen for variable $v$. This is allowed because $x$ depends on $v$.

Choose for $y$ the same value that was chosen for $x$; this is allowed because $y$ depends on $x$. 

Choose the same fixed value for all variables $x_1,\dots,x_n,w$. \\
\end{proof}

The reader may have noticed that the proof procedure dictates only very generic requirements concerning the construction procedure for the model; it is only asked that the domain contains at least two elements. We will discuss this point in the next section (corollary \ref{MODELRELEV}).\\

Here we add a necessary condition for relevance. The result is not so interesting in itself, but it suggests what we should do in order to find a characterization of relevance; the proof illustrates how signalling works in simple cases.

\begin{teo}   \label{NEC}
If an $I\exists\exists$ occurring in a quantifier prefix $\vec Q$  is relevant in $\vec Q$, then it weakly breaks knowledge memory in $\vec Q$.
\end{teo}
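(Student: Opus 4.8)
The plan is to prove the contrapositive: assuming the $I\exists\exists$ does \emph{not} weakly break knowledge memory in $\vec Q$, I will show it is irrelevant, i.e.\ that for every sentence $\varphi=\vec Q\psi$ and every structure $M$ the games $G(\varphi,M)$ and $G(\varphi^{y\leftarrow x},M)$ have the same winner (including the undetermined case). Write $(\exists x/X)$ for the superordinate quantifier and $(\exists y/Y)$ for the slashed one, with $x\in Y$. The hypothesis says exactly that no variable superordinate to $(\exists x/X)$ lies in $Y\setminus X$; equivalently, writing $\mathrm{Sup}(x)$ for the set of variables quantified before $x$, we have $Y\cap\mathrm{Sup}(x)\subseteq X$. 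This inclusion is the only place the hypothesis is used. Note that, unlike the proof of Theorem~\ref{SUFF}, here I cannot exhibit a single witnessing formula; I must argue uniformly over all $\varphi$ with prefix $\vec Q$, so the argument is about strategies rather than about a counterexample.

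First I would record the ``free'' half of the equivalence, valid for any prefix. Since the two games differ only in the slash set at the $y$-node, which belongs to $\eloise$, $\abelard$'s strategy set is literally the same in both games, while $\eloise$'s information partition at the $y$-node is finer in $G(\varphi^{y\leftarrow x},M)$ (she now also sees $x$). A function constant on the coarser $Y$-information sets is automatically constant on the finer ones, so every $\eloise$-strategy legal in $G(\varphi,M)$ is still legal in $G(\varphi^{y\leftarrow x},M)$. From this one reads off immediately that $\varphi$ true implies $\varphi^{y\leftarrow x}$ true, and that $\varphi^{y\leftarrow x}$ false implies $\varphi$ false.

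The heart of the matter is a single \emph{simulation lemma} supplying the reverse implications: under the hypothesis $Y\cap\mathrm{Sup}(x)\subseteq X$, for every $\eloise$-strategy $\sigma'$ legal in $G(\varphi^{y\leftarrow x},M)$ there is an $\eloise$-strategy $\sigma$ legal in $G(\varphi,M)$ producing, against every $\abelard$-strategy $\tau$, exactly the same play, $z_{\sigma\tau}=z_{\sigma'\tau}$. The idea is that the only extra information $\sigma'$ exploits is the value of $x$, and $\eloise$ can \emph{recompute} it at the $y$-node without seeing it directly. Let $\sigma$ agree with $\sigma'$ at every node except the $y$-node, and at the $y$-node let $\sigma$ first evaluate the copied $x$-choice function $\sigma'_x$ on the variables visible at the $x$-node, obtaining a value $\hat x$, and then play $\sigma'_y$ with $\hat x$ substituted for $x$. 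Since $x$ is superordinate to $y$, the variables $\mathrm{Sup}(x)\setminus X$ that feed $\sigma'_x$ are all quantified before $y$; and by the hypothesis none of them lies in $Y$, so $\eloise$ genuinely sees them at the $y$-node in $G(\varphi,M)$. This guarantees two things at once: $\hat x$ is a function of variables outside $Y$, so $\sigma$ is constant on the coarser $Y$-information sets and is a legal strategy in $G(\varphi,M)$; and along any actual play $\hat x$ coincides with the value really chosen for $x$, so $\sigma$ makes the same $y$-move as $\sigma'$ and, all other moves being copied, yields the identical play.

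Granting the lemma, the proof closes quickly. If $\sigma'$ is winning in $G(\varphi^{y\leftarrow x},M)$, its simulant $\sigma$ wins every play in $G(\varphi,M)$, so $\varphi^{y\leftarrow x}$ true implies $\varphi$ true; and if $\tau$ is winning for $\abelard$ in $G(\varphi,M)$, then given any $\sigma'$ the strategy $\tau$ already beats the simulant $\sigma$, hence beats $\sigma'$ on the same play, so $\varphi$ false implies $\varphi^{y\leftarrow x}$ false. Together with the free half, $\varphi$ and $\varphi^{y\leftarrow x}$ are true in the same models and false in the same models, hence have identical truth values everywhere, which is irrelevance. The main obstacle I expect is the bookkeeping in the lemma: one must check carefully that $\sigma$ respects the coarser information sets of $G(\varphi,M)$ (this is exactly where $Y\cap\mathrm{Sup}(x)\subseteq X$ is essential) and verify, by induction along the play, that the recomputed $\hat x$ really equals the value chosen at the $x$-node; intermediate existential moves between $x$ and $y$ cause no trouble because $\hat x$ depends only on variables superordinate to $x$.
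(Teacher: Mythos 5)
Your proposal is correct and takes essentially the same route as the paper: both prove the contrapositive, and your simulation lemma—recomputing $\hat x$ at the $y$-node from the variables visible at the $x$-node, legal because the hypothesis $Y\cap\mathrm{Sup}(x)\subseteq X$ keeps those variables out of $Y$—is exactly the paper's substitution of $f_x(u_1,\dots,u_s)$ for $x$ inside $f_y(v_1,\dots,v_t,x)$. The only (harmless) organizational difference is that for the falsity directions the paper appeals to the general fact that slash sets on existential quantifiers never affect $\abelard$'s winning possibilities, whereas you route one of those directions through the simulation lemma as well.
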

\begin{proof}
Suppose that the $I\exists\exists$ does not weakly break knowledge memory. Suppose $\varphi$ is a sentence beginning with prefix $\vec Q$, and suppose we have a winning strategy for $\exists$loise in $\varphi^{y\leftarrow x}$; in it,
$\exists$loise chooses $x$ according to a function $f_x(u_1,\dots,u_s)$ and $y$ according to a function 
$f_y(v_1,\dots v_t,x)$ such that $\{u_1,\dots,u_s\}\subset\{v_1,\dots v_t\}$. This inclusion holds because the $I\exists\exists$ does not weakly break knowledge memory. We may turn $\sigma$ into a winning 
strategy for $\varphi$ by choosing $y$ according to a strategy function $g_y(v_1,\dots,v_t)$
defined using the (permitted) value of $f_x(u_1,\dots,u_s)$ anywhere $x$ occurred in the definition of 
$f_y(v_1,\dots,v_t,x)$.\\
Other cases: \\ - Suppose $\exists$loise has no winning strategy for $\varphi^{y\leftarrow x}$; then she cannot have one for $\varphi$,
whose game is harder to win due to lack of information.\\ 
-Suppose $\forall$belard has a winning strategy for $\varphi^{y\leftarrow x}$; then, this is also winning  
for $\varphi$, because slashes in existential quantifications play no role for $\forall$belard \\
-Suppose $\forall$belard has no winning strategies for $\varphi^{y\leftarrow x}$: then, for the same reason, he has none
for $\varphi$.
\end{proof}

\section{The characterization result} \label{TCR}

We have seen that (Theorem \ref{NEC}) if an existential declaration of independence of $y$ from $x$ is relevant, then there must be some information which is available to the verifier in $x$ but is explicitly hidden to the verifier in $y$. Also, it seems likely that the information we are speaking about is generated by $\abelard$'s moves, i.e. by the choice made for some universal quantification. Lemma \ref{SUFF} agrees with this observation. All of this suggests a generalization of the notions introduced in the previous paragraph.

\begin{df}
Given two quantifications $(Qv/V)$ and $(Q'v'/V')$ occurring in a quantifier prefix $\vec P$, we write 
\[
(Qv/V) \prec (Q'v'/V')
\]
 in order to say that $(Qv/V)$ is superordinated to $(Q'v'/V)$ in $\vec P$.
\end{df} 
\begin{df} \label{DEFBSS}
A sequence
\[
 ((\forall v_k/V_k),(\exists v_{k-1}/V_{k-1}),\dots,(\exists v_1/V_1),(\exists x/X),(\exists y/Y))
\]
 of quantifications is a \textbf{broken signalling sequence} if the following conditions hold:\\
1)   $x\in Y$ \\
2)   $(\forall v_k/V_k) \prec (\exists v_{k-1}/V_{k-1})\prec\dots\prec(\exists v_1/V_1)\prec(\exists x/X)\prec(\exists y/Y)$ \\
3)   $v_1\notin X$ and $v_i\notin V_{i-1}$, for all $i = 2..k$ \\
4)   $v_1,\dots,v_k\in Y$     \\
In case $k=1$, we always request that $v_k$ is universally quantified.
\end{df}

\begin{teo}  \label{THEONE}
Suppose there is an $I\exists\exists$  of the form $(\exists y/\dots x\dots)$ occurring in a quantifier prefix $\vec Q$. Let $Y$ be be a name for the slash set of $y$. Then, the $I\exists\exists$ is relevant if and only if $\vec Q$ contains a broken signalling sequence of the form $((\forall v_k/V_k),(\exists v_{k-1}/V_{k-1}),\dots,(\exists v_1/V_1),(\exists x/X),(\exists y/Y))$.
\end{teo}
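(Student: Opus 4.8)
The plan is to prove the two implications separately, adapting respectively the witness construction of Lemma~\ref{SUFF} and the strategy-transfer argument of Lemma~\ref{NEC}.

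For the direction asserting that a broken signalling sequence forces relevance, I would generalize the witness of Lemma~\ref{SUFF}. Given a broken signalling sequence $((\forall v_k/V_k),\dots,(\exists v_1/V_1),(\exists x/X),(\exists y/Y))$, let $x_1,\dots,x_n$ list the existential variables of $\vec Q$ other than $v_1,\dots,v_{k-1},x,y$, let $W$ collect all variables of $\vec Q$, pick a fresh $w$, and consider, over any $M$ with $|M|\ge 2$,
\[
\varphi=\vec Q(\exists w/W)(v_k=y\land x_1=w\land\dots\land x_n=w).
\]
First I would exhibit a winning strategy for $\eloise$ in $G(\varphi^{y\leftarrow x},M)$: propagate $\abelard$'s choice for $v_k$ down the chain by setting $v_{k-1}:=v_k$, \dots, $v_1:=v_2$, $x:=v_1$, $y:=x$ — every copy being legitimate precisely by clause (3) of Definition~\ref{DEFBSS} ($v_i\notin V_{i-1}$, $v_1\notin X$) and by the removal of the slash at $y$ — and set every $x_i$ and $w$ to one fixed element; this makes $\varphi^{y\leftarrow x}$ true. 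Then I would show $\varphi$ is not true: any supposed winning $\sigma$ is forced by the conjuncts $x_i=w$ to play each $x_i$ equal to the constant $w$ (a $0$-ary function, $w$ being slashed from all of $W$), so the $x_i$ carry no information; comparing two plays $g,g'$ in which $\abelard$ keeps all universal moves fixed except $v_k$ (played as $e\neq e'$), the two nodes of choice for $y$ differ only on variables of $\{v_1,\dots,v_k,x\}\subseteq Y$, hence lie in one information set, so $\sigma$ prescribes the same value for $y$ in both, contradicting the conjunct $v_k=y$. Since \emph{true} differs from \emph{not true}, the declaration is relevant.

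For the converse I would argue the contrapositive: assuming $\vec Q$ contains no broken signalling sequence, I show $\varphi\equiv^*\varphi^{y\leftarrow x}$ for every $\varphi=\vec Q\psi$ and every $M$. The falsity half is symmetric and needs no hypothesis: a single play against a fixed strategy $\tau$ of $\abelard$ is realizable by some legitimate strategy of $\eloise$ regardless of her information sets, because distinct $\eloise$-nodes lying on one branch never share an information set (each quantifier occurs once along a branch); hence the family $\{z_{\sigma\tau}:\sigma\}$, and thus whether $\tau$ is winning for $\abelard$, is the same in both games, so $\varphi$ and $\varphi^{y\leftarrow x}$ are false in exactly the same models. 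As removing the slash only enlarges $\eloise$'s information, truth of $\varphi$ always yields truth of $\varphi^{y\leftarrow x}$, so everything reduces to the reverse truth-transfer. The crux is then to turn a winning $\sigma$ for $\varphi^{y\leftarrow x}$ into one for $\varphi$. Since the only extra information $y$ has in $\varphi^{y\leftarrow x}$ is the value of $x$ (its slash set being $Y\setminus\{x\}$), it suffices to let $y$ \emph{recompute} $x$ from the variables it may still see, those outside $Y$. I would call a variable before $y$ \emph{$y$-recoverable} if it lies outside $Y$, or is existential and every earlier variable outside its own slash set is $y$-recoverable; universal variables inside $Y$ are declared non-recoverable. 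An induction then produces, for each recoverable $u$, a term $g_u$ in the non-$Y$ variables whose value under $\sigma$ always equals that of $u$; substituting $g_x$ for the argument $x$ of $f_y$ gives a legitimate strategy for $\varphi$ realizing, play by play, the same terminal assignment as $\sigma$, hence winning. Finally $x$ is recoverable: were it not, some input $v_1\notin X$ of $x$ would be non-recoverable and therefore in $Y$; iterating, each non-recoverable existential has a non-recoverable earlier input inside $Y$ and outside its slash set, while an existential with no earlier inputs is vacuously recoverable, so the strictly decreasing finite chain must terminate at a universal variable, necessarily in $Y$ — exactly a broken signalling sequence, against the hypothesis.

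I expect the necessity direction to be the main obstacle, and within it the precise calibration of \emph{$y$-recoverability} relative to $Y$: it must be arranged so that recoverable variables are genuinely recomputable by $y$ inside $\varphi$, and so that the failure of recoverability of $x$ extracts a chain meeting all four clauses of Definition~\ref{DEFBSS}, in particular keeping each intermediate $v_i$ inside $Y$ and terminating at a \emph{universal} quantifier (the $k=1$ clause). Checking that the recomputed strategy is simultaneously legitimate (constant on $y$'s information sets) and winning (matching $\sigma$ on every play) is the remaining technical point; the sufficiency direction, by contrast, is a routine upgrade of Lemma~\ref{SUFF} once the signalling chain replaces the single universal quantifier used there.
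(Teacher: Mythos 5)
Your proof is correct and follows essentially the same route as the paper's: the sufficiency direction uses the identical witness formula $\vec Q(\exists w/W)(v_k=y\land x_1=w\land\dots\land x_n=w)$ over an arbitrary two-element-or-larger structure, with the same forcing of the $x_i$ to a constant and the same information-set contradiction at the node for $y$. Your necessity argument --- unfolding strategy functions so that $y$ can recompute $x$ from variables outside $Y$, and extracting a broken signalling sequence (terminating at a universal quantifier because an existential with no hidden inputs is vacuously recomputable) when this fails --- is exactly the paper's iterative elimination process with its stages $C_i$, $D_i$ and terms $t_0,\dots,t_{r-1}$, merely recast as a well-founded recursion via your notion of $y$-recoverability, and your treatment of the falsity direction and of the monotone truth direction matches the paper's appeal to the case analysis of Theorem~\ref{NEC}.
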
 
\begin{proof} 

$\Longleftarrow$) This proof is very similar to that of Theorem \ref{SUFF}; variable $v_k$ plays the same role that was played before by variable $v$. We just explain what modifications should be made.
Fix a broken signalling sequence in $\vec Q$. Let $x_1,\dots,x_n$ be the \emph{remaining} existentially quantified variables of $\vec Q$, i.e., those that are different from $v_{k-1},\dots,v_1,x,y$; let $W$ be as before. %; let variables $z_1,\dots,z_m$ be all the universally quantified variables except for $v_k$.
Let $\varphi$ be: 
\[
\varphi=\vec Q(\exists w/W)(v_k=y \land x_1=w \land \dots \land x_n=w).
\]
The model $M$ is defined as before, as any structure whose domain contains at least two elements.

From the hypothesis that $\eloise$ has a winning strategy for $G(\varphi,M)$ we get a contradiction as in the proof of Theorem \ref{SUFF}. The contradiction derives from conditions 1 and 4.
%Indeed, any winning strategy for $\eloise$ must be constant except when making choices for variables $v_{k-1},%\dots,v_1,x$; and we have that $v_{k-1},\dots,v_1,x\in Y$ because of conditions 1 and 4 in the definition of broken %signalling sequence.
 Indeed, define $h$, $g$, $A$ and $B$ similarly as in the proof of Theorem \ref{SUFF}. The only difference is that we cannot guarantee the existence of a $g$ that differs from $h$ only for what regards the choice of a value for $v_k$. Changing the value of $v_k$ will make so that strategy $\sigma$ will assign new values to $v_{k-1},\dots,v_1$. But, also in this case, conditions 1 and 4 guarantee that $\sigma$ prescribes the same move for both $A$ and $B$. So we get the contradiction as before.

$\eloise$ has a winning strategy for $G(\varphi^{y\leftarrow x},M)$.  We describe it.\\
For $i=1..k-1$ choose for variable $v_i$ the same value that was chosen for variable $v_{i+1}$. This is allowed by condition 3 in the definition of broken signalling sequence.  \\
For variable $x$, choose the same value that was chosen for $v_1$. Again, we are relying on condition 3.\\
For $y$, choose the same value that was chosen for $x$.\\
Choose one  and the same value for $x_1,\dots,x_n,w$. 

$\Longrightarrow$) Suppose that in $\vec Q$ there are no broken signalling sequences ending with $(\exists x/X),(\exists y/Y)$. Let $\varphi$ be a sentence which begins with prefix $\vec Q$, and $M$ a structure. \\
As a first case, suppose that $\eloise$ has a winning strategy $\sigma$ for $G(\varphi^{y\leftarrow x},M)$. We want to produce a winning strategy for $\eloise$ in $G(\varphi,M)$. We will do so through an iterative process of elimination of forbidden variables. In order to keep track of what happens during this process, we shall make use of an indexed notation for denoting variables. The index will be a \emph{sequence} of numbers. For example, variable $u_{(n_1,\dots n_l)}$ will be the variable that is met during the $l$-th stage of the iterative process; to be more precise, it will be the $n_l$th among the existentially quantified variables which are associated to variable $u_{(n_1,\dots n_{l-1})}$ (which has been, in turn, generated during the $n_{l-1}$th stage, and so on). Instead, variable $z_{(n_1,\dots n_l)}$ is a universally quantified variable which is met during the $l$-th stage of the iterative process, and, as $u_{(n_1,\dots n_l)}$, is associated to variable $u_{(n_1,\dots n_{l-1})}$. 

Now, strategy $\sigma$ chooses $x$ according to a function $f_x(u_{(1)},\dots,u_{(m)},z_{(1)},$ $\dots,z_{(n)})$, where $u_{(1)},\dots,u_{(m)}$ are existentially quantified variables, and $z_{(1)},$ $\dots,z_{(n)}$ are universally quantified variables. And it chooses $y$ according to some function $f_y(v_1,\dots,v_t,x)$. We want to replace the reference to $x$ with a functional expression which contains, as variables, only variables which are not in $Y$, i.e. variables that are visible for $y$ in $\varphi$. \\
As a stage 0, replace $x$ in $f_y(v_1,\dots,v_t,x)$ with $f_x(u_{(1)},\dots,u_{(m)},z_{(1)},\dots,$ $z_{(n)})$; call $t_0$ the object thus obtained. \\
As stage 1, let $C_1$ be the set of variables $u_{(i)}$ such that either (A) $u_{(i)}$ does not depend on any existentially quantified variable or (B) $u_{(i)}\notin Y$; in case (B) does not hold, replace such variables in $t_0$ with expressions $f_{u_{(i)}}(z_{(i,1)},z_{(i,2)},\dots)$ (in which the list of arguments could also be empty) where $f_{u_{(i)}}$ is the function which $\sigma$ assigns to variable $u_{(i)}$. Let, also, $D_1$ be the set of variables $u_{(i)}$ which are not in $C_1$. These ones are to be replaced in $t_0$ by expressions $f_{u_{(i)}}(u_{(i,1)},u_{(i,2)},\dots,z_{(i,1)},z_{(i,2)},\dots)$, as established by $\sigma$. The final result of these substitutions will be an expression $t_1$.\\
At stage 2, let $C_2:=\{ u_{(i,j)} | u_{(i)}\in D_1 \text{ and } u_{(i,j)} \text{ satisfies either (A) or (B)}\}$ and $D_2:=\{ u_{(i,j)} | u_{(i)}\in D_1 \text{ and } u_{(i,j)}\notin C_2\}$. Perform in $t_1$ analogous substitutions as those that were made at stage 1. \\
The generic $n$-th stage is performed analogously to stage 2. The process ends as soon as some $D_r$ is empty. The correspondent expression $t_{r-1}$ will be a nested application of functions; the only unnested objects occurring in it will be nullary functions (which correspond to existentially quantified variables of some $C_i$) and variables.\\
Now, $\eloise$ is allowed to use any of her strategy functions in order to define a strategy function for choosing $y$; furthermore, the existentially quantified variables that are left in $t_{r-1}$ are those that satisfy (B), so they are accessible for $y$.  We want to show that $\eloise$ is also allowed to use any of the universally quantified variables occurring in $t_{r-1}$. If we manage to do so, we will be able to define a winning strategy for $\eloise$ in $G(\varphi,M)$ just by replacing, in $\sigma$, function $f_y$ with the function defined by the expression $t_{r-1}$, and so our theorem will be proved. \\
So, suppose that some universally quantified variable $z_{(n_1,\dots,n_s)}$ occurring in $t_{r-1}$ is not accessible to $y$, i.e. suppose that $z_{(n_1,\dots,n_s)}\in Y$. We will show that, then, the sequence $\vec S = ((\forall z_{(n_1,\dots,n_s)}/Z_{(n_1,\dots,n_s)}),(\exists u_{(n_1,\dots,n_{s-1})}/U_{(n_1,\dots,n_{s-1})}),$ $\dots,(\exists u_{(n_1)}/U_{(n_1)}),(\exists x/X),(\exists y/Y))$ is a broken signalling sequence: a contradiction. \\
Observe that $u_{(n_1,\dots,n_{s-1})}$ does not satisfy (B), i.e., it holds that $u_{(n_1,\dots,n_{s-1})}\in Y$. Analogously, we can prove that $u_{(n_1,\dots,n_{s-2})},\dots, $ $u_{(n_1,n_2)},u_{(n_1)}\in Y$. So, $\vec S$ satisfies condition 4 of the definition of broken signalling sequence. We also assumed that $x\in Y$: so, condition 1 is satisfied. Conditions 2 and 3 are trivially satisfied by construction. \\
This reasoning accounts for the case in which $\eloise$ has a winning strategy for $G(\varphi^{y\leftarrow x},M)$. The remaining cases are dealt with as in the proof of theorem \ref{NEC}.  
\end{proof}

Since the proof of the right-to-left implication defines a very generic model, we may strenghten the consequences of there being a broken signalling sequence: 

\begin{cor} \label{MODELRELEV}
Let  $\vec Q$ be a quantifier prefix. Suppose it contains a broken signalling sequence $((\forall v_k/V_k),(\exists v_{k-1}/V_{k-1}),\dots,(\exists v_1/V_1),(\exists x/X),(\exists y/Y))$. Then, there is an IF sentence $\varphi = \vec Q\psi$ such that, for all structures $M$ containing at least two elements, $\varphi^{y\leftarrow x}$ is true in $M$, while $\varphi$ is not.
\end{cor}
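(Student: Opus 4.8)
The plan is to take $\varphi$ to be precisely the witness sentence already built in the $\Longleftarrow$ direction of Theorem \ref{THEONE}, namely
\[
\varphi = \vec Q(\exists w/W)(v_k = y \land x_1 = w \land \dots \land x_n = w),
\]
where $x_1,\dots,x_n$ are the existentially quantified variables of $\vec Q$ other than $v_{k-1},\dots,v_1,x,y$, $W$ is the set of all variables occurring in $\vec Q$, and $w$ is fresh. The corollary then amounts to observing that nothing in the analysis of this $\varphi$ referred to the particular structure beyond the requirement that its domain have at least two elements; so the same conclusion holds uniformly, for every such $M$, rather than merely for one conveniently chosen model.

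The key point I would make explicit is that $\psi$ contains no non-logical predicate: its only atomic subformulas are equalities. Since $=$ is interpreted as the identity relation in every structure, for any assignment and any terminal history $h$ the truth of $A_h$ depends only on the values assigned to the variables, not on the interpretation of any relation symbol of $M$. Consequently the utility function, and hence the whole game tree with its information sets, of $G(\varphi,M)$ depends on $M$ only through its universe $|M|$: two structures with the same universe induce literally the same game, and the combinatorics are insensitive to which universe we take, provided it contains two distinct elements.

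With this in hand both halves follow from the arguments already given. For the positive half, the strategy described for $\eloise$ in $G(\varphi^{y\leftarrow x},M)$ — copy the value of $v_{i+1}$ to $v_i$ for $i=k-1,\dots,1$ (permitted by condition 3), copy $v_1$ to $x$, copy $x$ to $y$, and assign a single common value to $x_1,\dots,x_n,w$ — wins every play, since it makes $v_k=y$ and $x_1=\dots=x_n=w$ hold as identities; this verification uses only equality and so is valid in every $M$ with $|M|\geq 2$, giving $\varphi^{y\leftarrow x}$ true in all such $M$. For the negative half, the contradiction derived from a supposed winning $\sigma$ for $\eloise$ in $G(\varphi,M)$ reappears verbatim: one needs two distinct elements $e\neq e'$ to serve as $\abelard$'s choices for $v_k$ in the companion histories $g,g'$, and conditions 1 and 4 force $\sigma$ to prescribe the same $y$-move at the information-equivalent nodes $A,B$, which cannot win both plays. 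As this requires only $|M|\geq 2$, no winning strategy for $\eloise$ exists in $G(\varphi,M)$ for any such $M$.

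The only thing to be careful about — and it is a check rather than a real obstacle — is that the negative half delivers exactly ``$\varphi$ is not true'' and not the stronger ``$\varphi$ is false'', which is all the corollary asks and is what the non-existence of an $\eloise$-winning strategy yields. I would also note that the ``at least two elements'' hypothesis is not merely convenient but precisely where the argument bites, since over a one-element domain every sentence in this equality-only language is trivially true.
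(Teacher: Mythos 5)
Your proposal is correct and takes exactly the route the paper intends: the paper offers no separate argument for Corollary \ref{MODELRELEV}, justifying it solely by the remark that the model in the right-to-left proof of Theorem \ref{THEONE} was ``very generic,'' and your write-up simply makes that inspection explicit (the witness sentence uses only equality atoms, so both the winning strategy for $\varphi^{y\leftarrow x}$ and the impossibility argument for $\varphi$ are uniform over all structures with at least two elements). The only nitpick is your closing aside: not \emph{every} sentence of the pure-equality language is true over a one-element domain (e.g.\ one containing a negated equality), though the particular witness $\varphi$, whose matrix is a conjunction of positive equalities, indeed is.
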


\section{Some consequences}  \label{SOMECON}
We can see that the main theorem has immediate consequences for simpler classes of sentences.

\begin{cor} \label{COR1}
Let $\varphi$ be a prenex existential IF sentence. We may remove or add any superordinated variable in the slash sets of $\varphi$, obtaining still a strongly equivalent formula.
\end{cor}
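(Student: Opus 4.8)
The plan is to read Corollary \ref{COR1} off directly from Theorem \ref{THEONE}, the crucial observation being that a broken signalling sequence, by Definition \ref{DEFBSS}, must begin with a \emph{universal} quantification $(\forall v_k/V_k)$. A prenex existential IF sentence has a quantifier prefix $\vec Q$ consisting solely of existential quantifications, so $\vec Q$ cannot contain any broken signalling sequence at all. Hence, by the right-to-left implication of Theorem \ref{THEONE} taken in contrapositive form (no broken signalling sequence implies irrelevance), every $I\exists\exists$ occurring in $\vec Q$ is irrelevant in $\vec Q$.

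Next I would unwind what irrelevance gives. By definition, irrelevance of the $I\exists\exists$ of $y$ from $x$ in $\vec Q$ means that for every sentence $\varphi = \vec Q\psi$ and every structure $M$, $\varphi$ and $\varphi^{y\leftarrow x}$ carry the same truth value (true, false, or undetermined); since having the same three-valued value on all structures is precisely being both truth- and falsity-equivalent, this reads $\varphi \equiv^* \varphi^{y\leftarrow x}$. That is exactly the assertion that removing the superordinated variable $x$ from the slash set $Y$ of $y$ yields a strongly equivalent sentence.

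To remove an arbitrary collection of superordinated variables I would proceed one variable at a time. Observe first that in a regular formula every member of a slash set is necessarily a superordinated variable (regularity condition 1), and in a purely existential prefix it is necessarily existential; so the superordinated variables eligible for removal are exactly the legitimate ones. Deleting a single variable $x$ from a slash set leaves the prefix purely existential and preserves regularity and the single-quantification convention, so the result is again a prenex existential sentence to which the same argument applies. Chaining finitely many single-variable removals and invoking transitivity of $\equiv^*$ establishes strong equivalence between the original sentence and the one with the chosen variables deleted.

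Finally, for \emph{adding} a superordinated variable $x$ to a slash set $Y$, I would note that this is removal read backwards: writing $\varphi'$ for the sentence obtained from $\varphi$ by adjoining $x$ to $Y$, the formula $\varphi'$ is again a regular prenex existential sentence (adding a superordinated $x$ to $Y$ respects regularity condition 1) and $(\varphi')^{y\leftarrow x} = \varphi$. The $I\exists\exists$ in the prefix of $\varphi'$ is again irrelevant, whence $\varphi' \equiv^* (\varphi')^{y\leftarrow x} = \varphi$, and one chains finitely many additions as before. I do not expect a substantive obstacle, since all the mathematical content is already packaged in Theorem \ref{THEONE}; the only point demanding a little care is the bookkeeping that each intermediate formula remains regular and keeps every variable quantified exactly once so that the theorem keeps applying, and this is immediate here because neither operation alters the quantifier block itself, only the slash sets.
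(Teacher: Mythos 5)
Your proposal is correct and takes essentially the same route as the paper: both arguments boil down to the observation that a broken signalling sequence must begin with a universal quantification (Definition \ref{DEFBSS}), so a purely existential prefix contains none, and then Theorem \ref{THEONE} yields irrelevance of every $I\exists\exists$, which unwinds exactly to strong equivalence under removal of a slashed variable. The differences are only bookkeeping---the paper first saturates every slash set with all superordinated variables and obtains both $\varphi$ and any modified version of it by removals from that single maximal formula, whereas you chain single-variable removals and handle additions as removals read backwards; note only that the implication you invoke is the contrapositive of the left-to-right direction of Theorem \ref{THEONE} (relevance implies existence of a broken signalling sequence), not the right-to-left one as you label it, a harmless slip since the theorem is a biconditional.
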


\begin{proof}
Let $\varphi'$ be obtained from $\varphi$ by adding to each slash set all of its superordinated variables.
Since there are only existential quantifiers in $\varphi'$, all of its declarations of independence are $I\exists\exists$s. And, since there are no universal quantifiers occurring in $\varphi'$, its quantifier prefix cannot contain any broken signalling sequence. So, no declaration of independence in $\varphi'$ is relevant. Thus, they may be removed from slash sets without changing the truth value of the formula. So, all of the formulas that can be obtained from $\varphi'$ by removing some declarations of independence (including, thus, $\varphi$) are strongly equivalent. 
\end{proof}

This implies, in particular:

\begin{cor}
Every prenex existential IF sentence is strongly equivalent to a first-order sentence.
\end{cor}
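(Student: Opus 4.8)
The plan is to combine the previous Corollary \ref{COR1} with the known characterization of perfect-recall IF sentences mentioned in section \ref{GTS}. The statement asserts that every prenex existential IF sentence is strongly equivalent to a first-order sentence, and the most economical route is to first use Corollary \ref{COR1} to strip all declarations of independence, and then argue that the resulting fully quantified formula (with empty slash sets) is essentially first-order.

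First I would take an arbitrary prenex existential IF sentence $\varphi$. By Corollary \ref{COR1}, I may freely remove all declarations of independence from the slash sets of $\varphi$ without altering its spectrum of truth values (strong equivalence is preserved). Let $\varphi_0$ denote the formula obtained by emptying every slash set, so that $\varphi_0 \equiv^* \varphi$. Since $\varphi$ was existential and prenex, $\varphi_0$ is a prenex block of ordinary existential quantifiers $\exists x_1 \cdots \exists x_m$ applied to a quantifier-free matrix $\psi$ built from atomic formulas, their negations, conjunctions and disjunctions.

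Next I would observe that $\varphi_0$ has empty slash sets everywhere, hence contains no declarations of independence at all; in particular it trivially has action recall and knowledge memory according to the syntactic criteria recalled in section \ref{GTS}, so it is a formula of perfect recall. By the cited result (\cite{Sev2006}, \cite{ManSanSev2011}) every IF formula of perfect recall is truth equivalent to a first-order sentence and falsity equivalent to a (possibly different) first-order sentence. Alternatively, and more directly, one checks that an IF sentence with all slash sets empty is just an ordinary first-order sentence under game-theoretic semantics: every information set is a singleton, so pure strategies coincide with the Skolem functions of classical first-order logic, whence the IF game $G(\varphi_0,M)$ is determined and $\varphi_0$ is true in exactly the models where the corresponding first-order sentence is true, and false exactly where its negation holds. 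Either way we obtain a first-order sentence $\chi$ with $\chi \equiv^* \varphi_0 \equiv^* \varphi$, which proves the claim.

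The main obstacle, and the only point requiring care, is confirming that an IF sentence with uniformly empty slash sets genuinely collapses to first-order logic at the level of \emph{both} truth and falsity, rather than merely truth. This amounts to noting that when all information sets are singletons the semantic game is a perfect-information game, hence determined, so neither player can be left in the undetermined regime; the value matches that of the classical first-order evaluation game. I would therefore lean on the determinacy of perfect-information IF games, which is exactly what the perfect-recall result in section \ref{GTS} packages for us, to secure strong (two-sided) equivalence and complete the argument.
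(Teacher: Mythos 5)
Your main line of argument is correct and is essentially the paper's own proof: strip every slash set using Corollary \ref{COR1}, observe that the resulting sentence $\varphi_0$ has first-order syntax, and note that its IF truth values coincide with its classical ones---truth because IF semantics conservatively extends first-order semantics, falsity because the slash-free game is one of perfect information and hence determined (equivalently, first-order logic satisfies excluded middle). Your ``more direct'' alternative is exactly this argument, and on that route the proof is complete.

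However, your other route---and in particular the closing claim that determinacy ``is exactly what the perfect-recall result in section \ref{GTS} packages for us''---contains a genuine error: perfect recall is not perfect information, and the perfect-recall theorem does not deliver determinacy. As the paper itself is careful to state, an $\text{IF}_{\text{PR}}$ sentence is truth equivalent to some first-order sentence and falsity equivalent to a \emph{possibly different} one; this is strictly weaker than strong equivalence to a single first-order sentence, precisely because perfect-recall sentences may be undetermined. For example, $\forall x(\exists y/\{x\})(x=y)$ has perfect recall for both players (there is no $I\exists\exists$, and the knowledge-memory conditions are vacuous), yet it is undetermined on every structure with at least two elements: it is truth equivalent to $\forall x\forall y(x=y)$ but falsity equivalent to $\forall x(x=x)$, and it is strongly equivalent to no first-order sentence at all, since a first-order sentence false nowhere must be true everywhere. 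So if you leaned on Sevenster's theorem alone, as your first alternative and your final paragraph suggest, you would obtain only the weaker two-sentence conclusion, not the corollary as stated. The proof stands only because your direct observation (all information sets are singletons, so the semantic game has perfect information and Zermelo determinacy applies) is self-contained; the detour through perfect recall should be deleted rather than relied upon.
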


\begin{proof}
Just remove all declarations of independence from the sentence; the resulting formula will be strongly equivalent (by \ref{COR1}) and will have first-order syntax. It will also be true in the same models in IF semantics as in first-order semantics, since IF logic is a conservative extension of first-order logic; it will be false in the same models because first-order formulas satisfy the excluded middle principle.
\end{proof}

(this fact can be given a much simpler proof if we make use of Hodges' compositional semantics, as is suggested in \cite{DurKon2011}).\\
More generally, by similar arguments we can prove:

\begin{cor}
Let $\vec Q$ be a quantifier prefix which contains only existential quantifications. Let $\vec Q\psi$ be an IF sentence. Let $\vec{Q'}$ be obtained from $\vec Q$ by removing or adding arbitrary superordinated variables from the slash sets. Then, 
\[
 \vec Q\psi \equiv^* \vec{Q'}\psi  
\]
In particular, if $\psi$ is a first-order formula, then $\vec Q\psi$ is strongly equivalent to a first-order sentence. 
\end{cor}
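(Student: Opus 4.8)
The plan is to reduce the statement to a finite sequence of single-variable modifications and then invoke the characterization theorem \ref{THEONE} to show that each such modification is irrelevant. First I would observe that $\vec{Q'}$ can be reached from $\vec Q$ by a finite chain of prefixes $\vec Q = \vec Q_0, \vec Q_1, \dots, \vec Q_m = \vec{Q'}$, where each $\vec Q_{i+1}$ differs from $\vec Q_i$ by adding or removing a single superordinated variable in a single slash set. Since $\vec Q$ contains only existential quantifications and we only ever touch superordinated variables — which are therefore themselves existentially quantified — every prefix $\vec Q_i$ in the chain again contains only existential quantifications, and each single modification acts on an $I\exists\exists$.

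The central point, and the step that replaces a direct appeal to Corollary \ref{COR1} (unavailable here because $\psi$ need not be first-order, so $\vec Q\psi$ need not be prenex existential), is that relevance of an $I\exists\exists$ depends only on the prefix and not on the matrix. Concretely, for each modification I would apply Theorem \ref{THEONE}: the $I\exists\exists$ being altered is relevant if and only if the prefix contains a broken signalling sequence ending with the pair $(\exists x/X),(\exists y/Y)$ in question. But a broken signalling sequence requires its first member to be a \emph{universal} quantification (the ordering in condition 2 of Definition \ref{DEFBSS}, and explicitly the clause for $k=1$), whereas every $\vec Q_i$ is purely existential. Hence no broken signalling sequence exists, so the altered $I\exists\exists$ is irrelevant. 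By the definition of irrelevance, which quantifies over all matrices and all structures, this yields in particular for our fixed $\psi$ that $\vec Q_i\psi$ and $\vec Q_{i+1}\psi$ have the same three-valued truth value on every structure, i.e. $\vec Q_i\psi \equiv^* \vec Q_{i+1}\psi$ (same truth value gives both truth- and falsity-equivalence). Whether the step adds or removes the variable, one of the two prefixes is the $y\leftarrow x$ reduct of the other, so applying irrelevance to the prefix with the larger slash set gives the strong equivalence, recalling that $\equiv^*$ is symmetric. Chaining these equivalences along the path yields $\vec Q\psi \equiv^* \vec{Q'}\psi$.

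For the final assertion I would specialize to $\psi$ first-order and take $\vec{Q'}$ to be $\vec Q$ with all slash sets emptied; this is legitimate because, by regularity (assumed throughout by convention), every variable occurring in a slash set of $\vec Q$ is superordinated and hence removable. The resulting sentence $\vec{Q'}\psi$ is syntactically first-order, and by the first part $\vec Q\psi \equiv^* \vec{Q'}\psi$. Finally, exactly as in the proofs of the preceding corollaries, $\vec{Q'}\psi$ is true in the same models under IF semantics as under first-order semantics (IF logic being a conservative extension of first-order logic) and false in the same models (first-order sentences obeying the excluded middle), so $\vec Q\psi$ is strongly equivalent to a first-order sentence. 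I expect the only delicate point to be the bookkeeping: checking that each intermediate prefix remains regular and purely existential so that Theorem \ref{THEONE} genuinely applies at every step. The mathematical content is carried entirely by the absence of universal quantifiers, which blocks every candidate broken signalling sequence.
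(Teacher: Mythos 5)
Your proposal is correct and takes essentially the same route as the paper, which proves this corollary ``by similar arguments'' to Corollary \ref{COR1}: since a purely existential prefix can contain no broken signalling sequence (any such sequence must begin with a universal quantification), Theorem \ref{THEONE} makes every $I\exists\exists$ in every intermediate prefix irrelevant, and irrelevance---which by definition quantifies over all matrices and all structures---gives strong equivalence at each modification, with the first-order conclusion then following from conservativity and excluded middle exactly as in the paper. The only cosmetic difference is that the paper's argument in Corollary \ref{COR1} passes through the prefix with maximal slash sets and removes declarations from it, whereas you walk a chain of single additions/removals from $\vec Q$ to $\vec{Q'}$; the mathematical content is the same.
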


As a last generalization,

\begin{cor}
Let $\vec Q$ be a quantifier prefix of the form $\exists\forall\exists^1$ (i.e. a sequence of existential quantifications, followed by a sequence of universal quantifications, followed by one single occurrence of an existential quantification). Let $\vec Q\psi$ be an IF sentence. Let $\vec{Q'}$ be obtained from $\vec Q$ by removing or adding arbitrary superordinated variables from the slash sets of existential quantifiers. Then, 
\[
 \vec Q\psi \equiv^* \vec{Q'}\psi  
\]
In particular, if $\psi$ is a first-order formula, then $\vec Q\psi$ is truth equivalent to a first-order sentence. 
\end{cor}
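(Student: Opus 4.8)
The plan is to combine the irrelevance of all $I\exists\exists$ declarations in such a prefix (a direct reading of Theorem~\ref{THEONE}) with a Skolem-function analysis of the unique trailing existential quantifier. Write $\vec Q = (\exists x_1/X_1)\cdots(\exists x_p/X_p)(\forall u_1/U_1)\cdots(\forall u_q/U_q)(\exists y/Y)$. First I would check that $\vec Q$ admits no broken signalling sequence ending in any $I\exists\exists$ pair. By Definition~\ref{DEFBSS} such a sequence must begin with a \emph{universal} quantification $(\forall v_k/V_k)$ which, through a chain of existentials, is superordinated to the superordinate member $(\exists x/X)$ of the pair. But the superordinate member of any $I\exists\exists$ in $\vec Q$ is some $x_i$, and every quantification superordinated to an $x_i$ is again one of $x_1,\dots,x_{i-1}$, hence existential; no universal quantification is superordinated to any $x_i$, since the whole universal block follows the whole existential block. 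Thus no broken signalling sequence exists, and by Theorem~\ref{THEONE} every $I\exists\exists$ occurring in $\vec Q$ is irrelevant.

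Next I would convert irrelevance into the asserted strong equivalence. The slash-set changes governed by the present machinery are precisely those involving superordinated \emph{existential} variables, i.e. the $I\exists\exists$ declarations: for the block quantifiers the only superordinated variables are the earlier $x_i$, and a superordinated existential added to or deleted from $Y$ is again an $I\exists\exists$. (Adding a superordinated \emph{universal} to $Y$ would instead be an $I\forall\exists$, of the type set aside in the remark following the definition of relevance; it is the trailing existential's universal dependence that the first-order reduction below must retain rather than erase.) Each single such modification removes --- or, read backwards, restores --- exactly one irrelevant $I\exists\exists$, hence by the definition of relevance it preserves the full truth value on every structure, yielding a strongly equivalent sentence. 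Because the structural fact used above (no universal superordinated to any $x_i$) persists through every intermediate prefix, I would iterate, passing from $\vec Q$ down to the prefix with empty existential slash sets and back up to $\vec{Q'}$, to obtain $\vec Q\psi\equiv^*\vec{Q'}\psi$.

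For the final assertion, suppose $\psi$ is first-order. Using the equivalence just established I may assume the block slash sets $X_i$ are empty, so that a winning strategy for $\eloise$ amounts to a tuple of elements $\vec a$ for $x_1,\dots,x_p$ together with a single strategy function for $y$, whose arguments are exactly the universal variables outside $Y$ (its dependence on the block variables being absorbed into $\vec a$). Writing $J=\{\,j : u_j\notin Y\,\}$, truth of $\vec Q\psi$ in $M$ is then equivalent to the statement $\exists\vec a\,\exists g\,\forall\vec u\,\psi$ with $g$ a function of $(u_j)_{j\in J}$ alone. Since there is exactly \emph{one} such function, it can be un-Skolemised by splitting the universal block, so that $\vec Q\psi$ is true in $M$ iff the first-order sentence $\exists x_1\cdots\exists x_p\,\forall(u_j)_{j\in J}\,\exists y\,\forall(u_j)_{j\notin J}\,\psi$ is. I expect the main obstacle to lie exactly here: justifying that a single trailing existential always un-Skolemises into a genuine first-order prefix (an argument that truly needs the hypothesis $\exists^1$, since two partially informed trailing existentials would yield a Henkin branching with no first-order equivalent), and verifying that the $I\exists\exists$ irrelevance of the previous paragraph legitimately collapses the block choices to the plain tuple $\vec a$.
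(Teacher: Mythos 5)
Your proposal is correct and takes essentially the route the paper intends: the corollary is stated there without explicit proof, as following ``by similar arguments'' from Theorem \ref{THEONE}, and the core of that argument is exactly yours --- no broken signalling sequence can end in any $I\exists\exists$ pair of this prefix, since every quantification superordinated to a block existential $x_i$ is itself existential, so all such declarations are irrelevant and may be removed or restored one at a time. Your two supplementary points --- reading the allowed modifications as concerning superordinated \emph{existential} variables only (which matches the paper's remark that the main theorem cannot touch independence from universally quantified variables), and un-Skolemising the single trailing existential into $\exists x_1\cdots\exists x_p\,\forall(u_j)_{j\in J}\,\exists y\,\forall(u_j)_{j\notin J}\,\psi$ for the first-order reduction --- are precisely the details the paper leaves implicit, and both are sound.
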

This last result only guarantees truth-equivalence: our main theorem does not allow us to remove declarations of independence from universally quantified variables.\\

Besides classifying quantifier prefixes according to the kind of quantifications involved, we can look instead at the kind of independence sets that appear in them. From this point of view, we also have that\footnote{The following corollary was suggested by Allen L. Mann, personal communication.}: 

\begin{cor}
Let $\vec Q$ be a quantifier prefix whose independence sets contain only existentially quantified variables. Let $\vec Q\psi$ be an IF sentence. Let $\vec{Q'}$ be obtained from $\vec Q$ by removing arbitrarily variables from the slash sets of existential quantifiers, or also adding them existentially quantified variables. Then, 
\[
 \vec Q\psi \equiv^* \vec{Q'}\psi  
\]
In particular, if $\psi$ is a quantifier-free formula, then $\vec Q\psi$ is strongly equivalent to a first-order formula. 
\end{cor}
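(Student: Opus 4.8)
The plan is to mirror the proof of Corollary~\ref{COR1}: use Theorem~\ref{THEONE} to show that every $I\exists\exists$ in sight is irrelevant, and then chain together the resulting strong equivalences. Concretely, I would first form the maximal prefix $\vec Q^{+}$ obtained from $\vec Q$ by adjoining to each existential slash set all of its superordinated existentially quantified variables. By regularity every slash-set member is superordinated, and by hypothesis no slash set of $\vec Q$ contains a universally quantified variable; the same is true of $\vec Q^{+}$, of $\vec Q'$ (we only ever add existentially quantified variables), and of every prefix lying between $\vec Q^{+}$ and either $\vec Q$ or $\vec Q'$. Hence the existential slash sets of both $\vec Q$ and $\vec Q'$ are contained in those of $\vec Q^{+}$, so each of $\vec Q\psi$ and $\vec Q'\psi$ is reached from $\vec Q^{+}\psi$ by deleting existentially quantified variables from existential slash sets one at a time.

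The key point is that each such single deletion removes an \emph{irrelevant} $I\exists\exists$. At every stage the quantifier acted upon has the form $(\exists y/Y)$ with $Y$ consisting only of existentially quantified variables. A broken signalling sequence in the sense of Definition~\ref{DEFBSS} would, since it begins with a universal quantification $(\forall v_k/V_k)$ and condition~4 forces $v_k\in Y$, require a universally quantified variable to lie in $Y$ --- which is impossible. By Theorem~\ref{THEONE} the declaration is therefore irrelevant, so deleting it produces a strongly equivalent sentence for the fixed matrix $\psi$ (irrelevance quantifies over all matrices, so it applies to our fixed $\psi$). Iterating along the two finite chains and using transitivity of $\equiv^*$ yields $\vec Q^{+}\psi\equiv^*\vec Q\psi$ and $\vec Q^{+}\psi\equiv^*\vec Q'\psi$, hence $\vec Q\psi\equiv^*\vec Q'\psi$.

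For the final assertion I would choose $\vec Q'=\vec Q^{-}$, the prefix obtained by emptying every existential slash set; by the above it satisfies $\vec Q^{-}\psi\equiv^*\vec Q\psi$. If the universal quantifiers of $\vec Q$ carry no slashes, then $\vec Q^{-}$ is a genuine first-order prefix and, $\psi$ being quantifier-free, $\vec Q^{-}\psi$ is a first-order formula; since IF logic is a conservative extension of first-order logic and first-order formulas obey the excluded middle, this formula has the same truth and falsity values under IF semantics as under first-order semantics, giving the desired strong equivalence. The delicate point, and the one I would watch most carefully, is exactly the status of slashes on universal quantifiers: emptying an existential slash set can never harm falsity, but dually an existential variable sitting in the slash set of a universal quantifier (an $I\exists\forall$) is in general not removable without destroying falsity equivalence --- the same asymmetry already visible in the $\exists\forall\exists^1$ corollary, where only truth equivalence survives. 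Strong equivalence to a first-order formula thus genuinely requires the universal quantifiers to be unslashed, and Theorem~\ref{THEONE} by itself only licenses removing declarations from existential quantifiers.
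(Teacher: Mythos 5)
Your proof of the main equivalence is correct and is essentially the paper's own argument: the paper's proof consists precisely of the observation that any broken signalling sequence must, by condition 4 of Definition \ref{DEFBSS}, put the universally quantified variable $v_k$ into $Y$, which is impossible when every independence set contains only existentially quantified variables, so that Theorem \ref{THEONE} makes every $I\exists\exists$ irrelevant; the saturation-and-deletion chaining through $\vec Q^{+}$ is the mechanism of Corollary \ref{COR1}, which the paper leaves implicit here.

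Your closing caveat, however, is not mere caution: it points at a real gap in the statement's final clause, one the paper's one-line proof never addresses. If some universal quantifier of $\vec Q$ carries a nonempty slash set (under the hypothesis, an $I\exists\forall$ declaration), the ``in particular'' claim can fail outright. Take $\exists x(\forall y/\{x\})(x=y)$: its only independence set is $\{x\}$ with $x$ existentially quantified, and its matrix is quantifier-free, so the hypotheses hold; yet on any structure with at least two elements both players' choices are forced to be constants, so $\eloise$ has no winning strategy (her constant can be avoided) and neither does $\abelard$ (his constant can be copied), and the sentence is undetermined --- hence not strongly equivalent to any first-order sentence, these being determined everywhere. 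Theorem \ref{THEONE} only licenses removing variables from \emph{existential} slash sets, and, as you note, removing an existential variable from a \emph{universal} slash set is in general falsity-relevant (the dual of the $I\forall\exists$ remark in Section \ref{TOW}). So the final assertion does need your additional hypothesis that the universal quantifiers are unslashed; with it, your conservativity-plus-excluded-middle argument is exactly the paper's argument for the earlier corollaries.
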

\begin{proof}
Any broken signalling sequence contains at least one declaration of independence from a universally quantified variable (what we called $v_k$). So our quantifier prefix does not contain any broken signalling sequence, and we may apply Theorem \ref{THEONE}. 
\end{proof}

\section{Relevance and equilibrium semantics}  \label{EQU}
Our main theorem \ref{THEONE} regards only the canonical truth values \emph{true, false, undetermined}. But, IF semantics can also be extended to the so-called \emph{equilibrium semantics}, which assigns a sort of probabilistic values to undetermined formulas, thus providing a finer grain classification of IF sentences (equilibrium semantics were introduced in \cite{Sev2006} and then developed in \cite{SevSan2010} and \cite{ManSanSev2011}). We will briefly review these notions and then show that the new values behave like the canonical truth calues as regards relevance.

\begin{df}
A \textbf{mixed strategy} for player $P$ is a probability distribution over the set of strategies of $P$. A \textbf{(mixed strategy) profile} for an IF game is a couple $(\mu,\nu)$ consisting of a mixed strategy for $\eloise$ and one for $\abelard$.
The \textbf{expected utility} for player $P$ and profile $(\mu,\nu)$ is defined as
\[
U_P(\mu,\nu) = \sum_{(\sigma,\tau)\in Pr}\mu(\sigma)\nu(\tau)u_P(z_{\sigma\tau}) 
\]
where we denote as $Pr$ the set of pure strategy profiles for the game under consideration.
\end{df}
\begin{df}
An \textbf{equilibrium} for a game $G(\varphi, s ,M)$ is a mixed strategy profile $(\mu,\nu)$ such that, for every mixed strategy $\zeta$ of $\eloise$, and for every mixed strategy $\xi$ of $\abelard$
\[
U_\exists(\zeta,\nu) \leq U_\exists(\mu,\nu)
\]
\[
U_\forall(\mu,\xi) \leq U_\forall(\mu,\nu)
\]
(i.e., no player can improve his expected utility by means of unilateral deviation).
\end{df}

Since IF games are constant sum games ($u_\exists(h)+u_\forall(h) = 1$ for all terminal histories), we can rewrite the conditions for equilibrium as:
\[
U_\exists(\zeta,\nu) \leq U_\exists(\mu,\nu) \leq U_\exists(\mu,\xi).
\]
The fact that the game is constant sum also implies that $U_\exists(\mu,\nu) = U_\exists(\mu',\nu')$ for any pair of equilibria $(\mu,\nu),(\mu',\nu')$; this justifies the following definition.

\begin{df}
Let $\varphi$ be an IF sentence, $M$ a structure. Suppose $G(\varphi,\emptyset,M)$ has an equilibrium $(\mu,\nu)$. Then we define the \textbf{value} of the sentence on $M$ as $v(\varphi,M) = U_\exists(\mu,\nu)$.
\end{df}

\begin{teo} (\cite{Sev2006})
An IF sentence $\varphi$ is true on $M$ iff $v(\varphi,M) = 1$; it is false iff $v(\varphi,M) = 0$. 
\end{teo}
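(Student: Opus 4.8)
The plan is to reduce everything to the two equilibrium inequalities of the preceding definition, exploiting the single crucial feature of IF games that $u_\exists$ and $u_\forall$ take values only in $\{0,1\}$, so that every expected utility lies in $[0,1]$. I would prove the four implications (true $\Rightarrow$ value $1$; value $1 \Rightarrow$ true; false $\Rightarrow$ value $0$; value $0 \Rightarrow$ false) separately, but they are completely symmetric under the exchange of $\eloise$ and $\abelard$ together with $0 \leftrightarrow 1$, so only two need to be written in full. Throughout I would use the constant-sum rewrite $U_\exists(\zeta,\nu) \leq U_\exists(\mu,\nu) \leq U_\exists(\mu,\xi)$, and I would deliberately avoid appealing to the minimax theorem: since the statement presupposes that $v(\varphi,M)$ is defined, an equilibrium $(\mu,\nu)$ is given, and the two inequalities are all I need.

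For the forward directions I would test the equilibrium against a point-mass deviation. If $\varphi$ is true, witnessed by a winning pure strategy $\sigma$ for $\eloise$, let $\delta_\sigma$ denote the mixed strategy concentrated on $\sigma$. Then $u_\exists(z_{\sigma\tau}) = 1$ for every $\tau$, so $U_\exists(\delta_\sigma,\nu) = \sum_\tau \nu(\tau)\,u_\exists(z_{\sigma\tau}) = 1$; the inequality $U_\exists(\zeta,\nu) \leq U_\exists(\mu,\nu)$ with $\zeta = \delta_\sigma$ forces $v(\varphi,M) = U_\exists(\mu,\nu) \geq 1$, whence $v(\varphi,M) = 1$ by boundedness. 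The implication ``$\varphi$ false $\Rightarrow v(\varphi,M) = 0$'' is the mirror image: from a winning pure $\tau$ of $\abelard$ one has $u_\exists(z_{\sigma\tau}) = 0$ for every $\sigma$, so $U_\exists(\mu,\delta_\tau) = 0$, and the inequality $U_\exists(\mu,\nu) \leq U_\exists(\mu,\xi)$ with $\xi = \delta_\tau$ gives $v(\varphi,M) \leq 0$, i.e. $v(\varphi,M) = 0$.

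The backward directions carry the real content, via a support-extraction argument resting on the elementary fact that a convex combination of numbers in $\{0,1\}$ equals $1$ (resp. $0$) only if every summand of positive weight equals $1$ (resp. $0$). Suppose $v(\varphi,M) = U_\exists(\mu,\nu) = 1$. The constant-sum condition gives $U_\exists(\mu,\xi) \geq 1$, hence $= 1$, for every $\xi$; applying this to $\xi = \delta_\tau$ for an arbitrary pure $\tau$ yields $\sum_\sigma \mu(\sigma)\,u_\exists(z_{\sigma\tau}) = 1$, which forces $u_\exists(z_{\sigma\tau}) = 1$ for every $\sigma$ in the (nonempty) support of $\mu$ and every $\tau$. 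Fixing any single $\sigma_0$ in that support produces a pure strategy beating all strategies of $\abelard$, i.e. a winning strategy for $\eloise$, so $\varphi$ is true. Symmetrically, $v(\varphi,M) = 0$ forces $U_\exists(\zeta,\nu) = 0$ for all $\zeta$; testing against $\zeta = \delta_\sigma$ extracts from the support of $\nu$ a pure winning strategy $\tau_0$ for $\abelard$, so $\varphi$ is false.

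The main obstacle — indeed the only delicate point — is exactly this passage from a mixed equilibrium back to a \emph{pure} winning strategy. It goes through precisely because the payoffs are $0/1$-valued: an extremal equilibrium value cannot arise from genuine averaging, so every strategy in the support of the relevant mixed strategy must already win outright. I would take care to record two small but genuine hypotheses this uses: that the support of a probability distribution is nonempty (so $\sigma_0$, resp. $\tau_0$, can be chosen), and that an equilibrium exists, which is legitimate since the very definition of $v(\varphi,M)$ presupposes it.
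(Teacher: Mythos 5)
Your proof is correct, but there is nothing in the paper to compare it against: the paper states this theorem as a citation of Sevenster's thesis \cite{Sev2006} and gives no proof of its own, so your argument should be judged on its own terms. On those terms it is sound and complete. The two forward implications via point-mass deviations are exactly right, and the delicate step you isolate --- extracting a \emph{pure} winning strategy from the support of a mixed equilibrium --- is handled correctly: since payoffs are $\{0,1\}$-valued, $U_\exists(\mu,\delta_\tau)=1$ for every pure $\tau$ forces $u_\exists(z_{\sigma\tau})=1$ for all $\sigma$ in $\mathrm{supp}(\mu)$ and all $\tau$, and because $\mathrm{supp}(\mu)$ does not vary with $\tau$ you may fix $\sigma_0$ once and for all, which is precisely the paper's definition of a winning strategy. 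One refinement is available that you did not exploit: your reading treats the statement as conditional on $v(\varphi,M)$ being defined, but in fact truth of $\varphi$ by itself guarantees an equilibrium exists --- if $\sigma$ is winning for $\eloise$, then $(\delta_\sigma,\nu)$ is an equilibrium for \emph{any} $\nu$, since $U_\exists(\delta_\sigma,\nu)=1$ dominates all deviations of $\eloise$ and leaves $\abelard$ with expected utility $0$ no matter what he plays; dually for falsity. This removes the need to presuppose that $v(\varphi,M)$ is defined in the two forward directions, so the biconditionals hold unconditionally (with ``$v(\varphi,M)=1$'' read as ``$v$ is defined and equals $1$''), which is the natural reading of the theorem as stated. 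Your caution about the summation over strategy profiles (implicitly discrete/countably supported distributions, as in the paper's definition of expected utility) is appropriate and your argument needs nothing beyond that.
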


So, we can say that these values define an extension of IF semantics (''equilibrium semantics''). Undetermined formulas receive values from the open interval $(0,1)$.

\begin{df}
An $I\exists\exists$ occurring in a quantifier prefix $\vec Q$ is \textbf{$\epsilon$-relevant} if there exist a sentence $\varphi=\vec Q \psi$, and a structure $M$, such that either $v(\varphi,M)\neq v(\varphi^{y\leftarrow x},M)$, or one of the values is defined and the other is not.
\end{df}

Clearly, relevance implies $\epsilon$-relevance. The opposite implication, which is not quite obvious, follows from the next theorem.

\begin{teo}
Suppose there is an $I\exists\exists$  of the form $(\exists y/\dots x\dots)$ occurring in a quantifier prefix $\vec Q$. Let $Y$ be be a name for the slash set of $y$. Then, the $I\exists\exists$ is $\epsilon$-relevant if and only if $\vec Q$ contains a broken signalling sequence of the form $((\forall v_k/V_k),(\exists v_{k-1}/V_{k-1}),\dots,(\exists v_1/V_1),(\exists x/X),(\exists y/Y))$.
\end{teo}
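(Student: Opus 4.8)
The plan is to deduce both directions from results already in hand, doing real work only for the ``only if'' direction. For ``if'', suppose $\vec Q$ contains a broken signalling sequence of the stated form. By Theorem \ref{THEONE} the $I\exists\exists$ is then relevant, so there are $\varphi=\vec Q\psi$ and a structure $M$ on which $\varphi$ and $\varphi^{y\leftarrow x}$ have different truth values. Invoking the correspondence between truth values and equilibrium values (\cite{Sev2006}) --- true means $v=1$, false means $v=0$, and undetermined means that the value, if defined at all, lies in $(0,1)$ --- one checks the three possible mismatches and finds that in each case either the two values differ or one is defined while the other is not. Hence the $I\exists\exists$ is $\epsilon$-relevant; this makes precise the ``clearly'' remark preceding the theorem.

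For ``only if'' I would argue the contrapositive: if $\vec Q$ contains no broken signalling sequence ending in $(\exists x/X),(\exists y/Y)$, then for every $\varphi=\vec Q\psi$ and every $M$ the games $G(\varphi,M)$ and $G(\varphi^{y\leftarrow x},M)$ admit equilibria for exactly the same $M$ and, where defined, have equal values. The two games live on the same tree, with the same utility function and the same set of $\abelard$-strategies (existential slashes are invisible to $\abelard$, cf.\ the proof of Theorem \ref{NEC}); they differ only in that the information set at the $y$-node is coarser in $\varphi$ (since $x\in Y$) than in $\varphi^{y\leftarrow x}$. Consequently every $\eloise$-strategy legal in $\varphi$ is also legal in $\varphi^{y\leftarrow x}$, giving a payoff-preserving inclusion $\iota$ on (mixed) $\eloise$-strategies. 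In the reverse direction I would reuse the substitution construction from the $\Longrightarrow$ part of the proof of Theorem \ref{THEONE}: it takes any $\eloise$-strategy $\sigma$ of $\varphi^{y\leftarrow x}$, rewrites the $y$-function $f_y$ by replacing its argument $x$ with the nested term $t_{r-1}$, and --- because there is no broken signalling sequence --- yields a term in variables all visible to $y$, hence a legal $\eloise$-strategy $\Phi(\sigma)$ of $\varphi$. The point to stress is that this construction never used that $\sigma$ was winning: it is purely functional, so it applies to every pure strategy, and by construction $\Phi(\sigma)$ assigns $y$ the same value that $\sigma$ does along every play while leaving all other choices untouched. Therefore $(\Phi(\sigma),\tau)$ and $(\sigma,\tau)$ determine the same terminal history for every $\abelard$-strategy $\tau$, so $\Phi$, extended to mixed strategies, preserves $U_\exists(\cdot,\nu)$ against every $\nu$.

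With $\iota$ and $\Phi$ both payoff-preserving against every $\abelard$-strategy, I would then transfer equilibria directly. Given an equilibrium $(\mu^*,\nu^*)$ of $G(\varphi^{y\leftarrow x},M)$, I would check that $(\Phi(\mu^*),\nu^*)$ is an equilibrium of $G(\varphi,M)$: $\eloise$ cannot improve because any deviation $\zeta$ in $\varphi$ satisfies $U_\exists(\zeta,\nu^*)=U_\exists(\iota\zeta,\nu^*)\le U_\exists(\mu^*,\nu^*)=U_\exists(\Phi(\mu^*),\nu^*)$, and $\abelard$ cannot improve because, the game being constant sum, his condition reads $U_\exists(\Phi(\mu^*),\xi)\ge U_\exists(\Phi(\mu^*),\nu^*)$, i.e.\ $U_\exists(\mu^*,\xi)\ge U_\exists(\mu^*,\nu^*)$ after transport along $\Phi$, which holds since $\nu^*$ was optimal for $\abelard$ in the other game. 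The symmetric computation, using $\iota$ in place of $\Phi$, turns an equilibrium of $G(\varphi,M)$ into one of $G(\varphi^{y\leftarrow x},M)$ of the same value. Hence an equilibrium exists for one game iff it exists for the other, and the values agree, which is precisely the failure of $\epsilon$-relevance.

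I expect the main obstacle to be the careful bookkeeping of this transfer rather than any deep new idea. One must verify that $\iota$ and $\Phi$ are genuinely legal strategy maps (they alter only the $y$-node, at which the two games' information structures are comparable), that they preserve the induced play and hence $U_\exists$ against every opponent strategy, and that the equilibrium-transfer computations go through for possibly infinite $M$, where one cannot appeal to a minimax theorem but must exhibit the transported profile explicitly. The substitution construction itself is inherited from Theorem \ref{THEONE}; the genuinely new content is the observation that it is payoff-preserving and not merely victory-preserving, which is exactly what upgrades relevance to $\epsilon$-relevance.
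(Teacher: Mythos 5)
Your proposal is correct and takes essentially the same route as the paper's own proof: the ``if'' direction is settled by Theorem \ref{THEONE} plus the truth-value/equilibrium-value correspondence, and for ``only if'' the paper likewise observes that the substitution construction of Theorem \ref{THEONE} applies to arbitrary (not just winning) pure strategies and is payoff-preserving, lifts it to mixed strategies (as a pushforward $\sigma\mapsto\sigma'$, your $\Phi$), and transfers equilibria in both directions using the inclusion $Strat(\varphi,M)\subseteq Strat(\varphi^{y\leftarrow x},M)$ (your $\iota$). The only difference is bookkeeping: the paper verifies payoff preservation by an explicit expected-utility computation over profiles, whereas you argue it play-by-play.
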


\begin{proof}
$\Longleftarrow$) The hypothesis implies relevance, by Theorem \ref{THEONE}, and thus $\epsilon$-relevance.\\
$\Longrightarrow$) We shall refer, in what follows, to the utility function for player $P$ in the game $G(\varphi^{y\leftarrow x},M)$ as $u_P^1$; for game $G(\varphi,M)$, we write $u_P^2$. We follow an analogous convention for expected utilities.

Suppose that in $\vec Q$ there are no broken signalling sequences ending with $((\exists x/X),(\exists y/Y))$. 
Thanks to this hypothesis, we are able, by means of the procedure described in the proof of Theorem \ref{THEONE}, to associate to any strategy function $f$ for $y$ from a game $G(\varphi^{y\leftarrow x},M)$ a term $t_f$ which defines $f$ referring only to variables which are not in $Y$. Then, to any pure strategy $p$ for $\eloise$ in $G(\varphi^{y\leftarrow x},M)$ we may associate another pure strategy $p'$ by replacing the choice function for $y$ with its associate term. This is a correct definition of a strategy for $G(\varphi,M)$. Notice 1) that $u_\exists^2(p',q) = u_\exists^1(p,q)$ against any strategy $q$, and 2) that the operator ' is surjective on the set of strategies for $G(\varphi,M)$ (indeed, $Strat(\varphi,M) \subseteq Strat(\varphi^{y\leftarrow x},M)$, and the operator fixes all strategies in $Strat(\varphi,M)$ -- the procedure never removes variables which are not in $Y$). To any mixed strategy $\sigma$ for $\eloise$ in $G(\varphi^{y\leftarrow x},M)$ we now associate a mixed strategy $\sigma'$ for $G(\varphi,M)$ defined as  $\sigma'(p') = \sum_{q\in\Xi_{p'}} \sigma(q)$, where $\Xi_{p'} = \{q\in Strat(\varphi^{y\leftarrow x},M) | q'=p'\}$ (clearly, $\sigma'$ is well-defined).
Again we have, for expected utilities, that $EU_\exists^2(\sigma',\tau)=EU_\exists^1(\sigma,\tau)$ against any $\tau$.

Now, suppose $\varphi$ and $\varphi^{y\leftarrow x}$ are both undetermined in a structure $M$ (the other cases are already taken care of by Theorem \ref{THEONE}). To any strategy profile $(\sigma,\tau)$ for $G(\varphi^{y\leftarrow x},M)$ we associate a profile $(\sigma',\tau)$ for $G(\varphi,M)$ as described above. We want to calculate the expected utilities for each player $\rho$. Let us now define $S=\{(p,q)|(p,q)\text{ is a strategy profile for } G(\varphi,M) \}$ and
% $T=\{(p',q')|(p,q)\text{ is a strategy profile for } G(\varphi^{y\leftarrow x},M) \}$, which are both sets of strategy %profiles for $G(\varphi,M)$, and we define also
 $X=\{(p,q)|(p,q)\text{ is a }$ 
$ \text{strategy profile for } G(\varphi^{y\leftarrow x},M) \}$. We have:
\begin{eqnarray} \nonumber
EU_\rho^2(\sigma',\tau) & = & \sum_{(p,q)\in S} \sigma'(p)\tau(q)u_\rho^2(p,q)          \nonumber    \\
                                 & = & \sum_{(p,q)\in S}\left[ \left(\sum_{r\in\Xi_p}\sigma(r)\right)\tau(q)u_\rho^2(p,q)\right]         \nonumber     \\
                                 & = & \sum_{(p,q)\in S}\sum_{r\in\Xi_p}\sigma(r)\tau(q)u_\rho^1(r,q)           \nonumber   \\
	 		 & = & \sum_{(r,q)\in X} \sigma(r)\tau(q)u_\rho^1(r,q)            \nonumber  \\
			 & = & EU_\rho^1(\sigma,\tau)					 \nonumber
\end{eqnarray}
In the second line we used the definition of $\sigma'$. In the third line we used distributivity, together with the fact that $u_\rho^2(p,q) = u_\rho^1(r,q)$. \\
Now, since the main equality holds for any profile $(\sigma, \tau)$ of game $G(\varphi^{y\leftarrow x},M)$, and $Strat(\varphi,M) \subseteq Strat(\varphi^{y\leftarrow x},M) $, we get that if $(\sigma,\tau)$ in an equilibrium for $\varphi^{y\leftarrow x}$, then $(\sigma',\tau)$ is an equilibrium for $\varphi$ with the same value. Suppose indeed there is a strategy $\tau^*$ for $\abelard$ such that $EU_\exists^2(\sigma',\tau^*) < EU_\exists^2(\sigma',\tau)$. Then, by the equality,  $EU_\exists^1(\sigma,\tau^*) < EU_\exists^1(\sigma,\tau)$, contradicting the fact that $(\sigma,\tau)$ is an equilibrium of $G(\varphi^{y \leftarrow x},M)$. Analogously (using also $Strat(\varphi,M) \subseteq Strat(\varphi^{y\leftarrow x},M)$) we can prove that inequalities of the form $EU_\exists^2(\sigma^*,\tau) > EU_\exists^2(\sigma',\tau)$ cannot hold.

We want to prove, now, that any equilibrium $(\sigma,\tau)$ for $G(\varphi,M)$ is also an equilibrium for $G(\varphi^{y \leftarrow x},M)$. Suppose instead that there is some strategy $\sigma^*$ for $\eloise$ in $G(\varphi^{y\leftarrow x},M)$ such that $EU_\exists^1(\sigma^*,\tau) > EU_\exists^1(\sigma,\tau)$. So, thanks again to the main equality, and since $\sigma'=\sigma$ (because operator ' fixes strategies of $G(\varphi,M)$) we have that $EU_\exists^2((\sigma^*)',\tau) >  EU_\exists^2(\sigma,\tau)$, contradicting the fact that $(\sigma,\tau)$ is an equilibrium for $G(\varphi,M)$. Analogously one can prove that $EU_\exists^1(\sigma,\tau^*) < EU_\exists^1(\sigma,\tau)$ cannot hold for any $\tau^*$.
\end{proof}
\begin{cor}
An $I\exists\exists$ occurring in a quantifier prefix $\vec Q$ is relevant if and only if it is $\epsilon$-relevant.
\end{cor}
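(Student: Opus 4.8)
The plan is simply to chain together the two syntactic characterizations established just above. Theorem~\ref{THEONE} states that the $I\exists\exists$ is relevant in $\vec Q$ precisely when $\vec Q$ contains a broken signalling sequence of the form $((\forall v_k/V_k),(\exists v_{k-1}/V_{k-1}),\dots,(\exists x/X),(\exists y/Y))$, and the theorem immediately preceding this corollary states that the same $I\exists\exists$ is $\epsilon$-relevant precisely when $\vec Q$ contains such a sequence. Both properties are thus equivalent to one and the same condition on the quantifier prefix, and hence to each other.

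Concretely, I would write the equivalence as a two-step biconditional:
\[
\text{relevant} \iff \vec Q \text{ contains a broken signalling sequence} \iff \epsilon\text{-relevant},
\]
where the left equivalence is Theorem~\ref{THEONE} and the right equivalence is the $\epsilon$-relevance theorem just proved. No further argument is required, so the corollary follows at once.

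I do not anticipate any genuine obstacle, since the statement is an immediate consequence of the two characterization theorems; the only point worth isolating is conceptual rather than technical. The implication ''relevant $\Rightarrow$ $\epsilon$-relevant'' was already observed directly in the text (a difference in truth values witnessed by pure strategies is \emph{a fortiori} a difference in the equilibrium value), so the content genuinely added here is the converse, ''$\epsilon$-relevant $\Rightarrow$ relevant''. This converse is exactly what factoring both notions through the common broken-signalling-sequence condition delivers: it shows that passing from the canonical truth values to mixed strategies and equilibrium values creates no new instances of relevance.
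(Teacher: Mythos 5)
Your proposal is correct and matches the paper's own route exactly: the paper also treats the corollary as an immediate consequence of chaining Theorem~\ref{THEONE} with the $\epsilon$-relevance characterization through the common broken-signalling-sequence condition, having already noted that relevance trivially implies $\epsilon$-relevance so that the genuine content is the converse. Nothing is missing.
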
 

This means that if we have a structure $M$ and an IF formula $\psi$ such that $0 < v(\vec Q\psi,M),v( (\vec Q\psi)^{y\leftarrow x},M )<1$ (both formulas are undetermined) and $v(\vec Q\psi,M) \neq v( (\vec Q\psi)^{y\leftarrow x},M )$, then there are also $N,\chi$ such that $v(\vec Q\chi,N) \neq$ $\neq v( (\vec Q\chi)^{y\leftarrow x},N)$ but at least one out of and $\vec Q\chi$ and $(\vec Q\chi)^{y\leftarrow x}$ is determined.\\

Corollaries analogue to those of section \ref{SOMECON} do hold.

\section{Non-prenex quantifier sequences} \label{SYNTREE}
The results of the previous sections were proved only for prenex sequences of quantifiers. We show here that some generalization is possible. We will need these generalized results in the following section, where we shall study the expressive power of a fragment of IF logic which is not closed with respect to the usual prenex transformations.

\begin{df}
The \textbf{syntactical tree} of an IF formula $\varphi$ is the set of quantifiers, connectives, and atomic subformulas of $\varphi$, ordered according to superordination.
\end{df}
\begin{df}
An \textbf{positive initial syntactical tree} is a finite tree whose elements are either (occurrences of) conjunctions, disjunctions or quantifiers (with their slash sets), and which respects the following constraints:\\
1) each connective has exactly two successors \\
2) each quantifier has exactly one successor. 
\end{df}
The word \emph{positive} refers to the fact that we do not allow negation symbols to occur.
\begin{df}
A \textbf{dense open subset} $Y$ of a tree $T$ is $Y\subset T$ such that
\[
\forall y\in Y\forall t\in T(t\preceq y \rightarrow t\in Y)
\] 
\end{df}
\begin{df}
An IF formula $\varphi$ \textbf{begins with $T$} if $T$ is a tree and a dense open subset of the syntactical tree of $\varphi$.
\end{df}

\begin{df}
We say that a I$\exists\exists$ declaration of independence (say, independence of $y$ from $x$) occurs in a tree $T$ if $\{(\exists x/X),(\exists y/Y)\}\subset T$,  $(\exists x/X) \prec (\exists y/Y)$ and $x\in Y$. \\
We say it is \textbf{t-relevant} in $T$ if there exist a sentence $\varphi$ beginning with $T$, and a structure $M$, such that $\varphi$ and $\varphi^{y\leftarrow x}$ have different truth values on $M$. 
\end{df}

\begin{df}
A \textbf{(generalized) broken signalling sequence} occurring in a positive initial tree $T$ is a linearly ordered subset of $T$ consisting of quantifiers only and satisfying conditions 1-2-3-4 of definition \ref{DEFBSS}.
\end{df}

The reader who is acquainted with fenomena of signalling in IF logic may be surprised to see that disjunction symbols are not going to play a role in this generalized notion. The proof of the following result should clarify this point.

\begin{teo}  \label{NONPRENEX}
Suppose $T$ is a positive initial tree with an I$\exists\exists$ declaration of independence of $y$ from x. Then, the $I\exists\exists$ is t-relevant if and only if $T$ contains a generalized broken signalling sequence of the form $((\forall v_k/V_k),(\exists v_{k-1}/V_{k-1}),\dots,\\ (\exists v_1/V_1),(\exists x/X),(\exists y/Y))$.
\end{teo}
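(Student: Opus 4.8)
The plan is to mirror the proof of Theorem \ref{THEONE}, carrying the entire argument along the unique branch from the root of the syntactic tree of $\varphi$ down to the node $(\exists y/Y)$. The observation that makes this possible — and that explains why disjunctions play no role — is that, by our standing convention, each variable is quantified exactly once, so $(\exists x/X)$, $(\exists y/Y)$ and every quantifier of a candidate sequence occupies a \emph{unique} node, reached by a \emph{unique} chain of connective choices. Hence, although $\eloise$ may select disjuncts on the way down, every history that actually reaches $(\exists y/Y)$ has made exactly the same connective choices; those choices are constant on the branch to $y$ and so cannot distinguish two histories lying in a common information set above $y$. In effect the information-set structure above $(\exists y/Y)$ depends only on the values assigned to the path variables, precisely as in the prenex case, and the relevant strategy functions $f_x$, $f_y$ remain genuine functions of those variables.

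For the direction $\Longleftarrow$ I would fix a generalized broken signalling sequence; being linearly ordered and composed of quantifiers superordinated to $(\exists y/Y)$, it lies on the branch to $y$, and by downward-closure of dense open subsets all its members already belong to $T$. I then complete $T$ to a sentence $\varphi$ by choosing a $T$-leaf $\ell^{*}$ below $(\exists y/Y)$ and attaching there the test formula $(\exists w/W)(v_k=y\land x_1=w\land\dots\land x_n=w)$, where $x_1,\dots,x_n$ are the existential variables superordinated to $y$ other than $v_{k-1},\dots,v_1,x$, and $W$ collects all path variables. The genuinely new ingredient is the treatment of connectives met on the path to $\ell^{*}$: at each such connective I fill the whole off-path subtree so that it becomes a logical truth if the connective is a conjunction, and (over models with at least two elements) a logical falsehood if it is a disjunction, which is possible with small closed formulas such as $(\forall u)(u=u)$ and $(\exists u)(\forall u')(u=u')$. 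This drives the play toward $\ell^{*}$: at a conjunction $\abelard$ prefers the test branch, the off-branch being an $\eloise$-win; at a disjunction $\eloise$ gains nothing by leaving the path, the off-branch being an $\eloise$-loss. Once the play is forced to $\ell^{*}$, any purported winning strategy for $\eloise$ in $G(\varphi,M)$ is refuted exactly as in Theorem \ref{SUFF} using conditions 1 and 4, while the winning strategy for $\eloise$ in $G(\varphi^{y\leftarrow x},M)$ copies $v_k$ down the chain $v_{k-1},\dots,v_1,x,y$ (licensed by condition 3), pins $x_1,\dots,x_n,w$ to a single value, and plays arbitrarily in each off-path subtree.

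For the direction $\Longrightarrow$ I would assume $T$ contains no generalized broken signalling sequence ending in $(\exists x/X),(\exists y/Y)$, take an arbitrary sentence $\varphi$ beginning with $T$ and a structure $M$, and show the slash has no effect. Given a winning strategy $\sigma$ for $\eloise$ in $G(\varphi^{y\leftarrow x},M)$, I run the iterative elimination procedure of Theorem \ref{THEONE} on the function $f_y$, substituting the strategy functions of the existentially quantified variables of $Y$ superordinated to $y$ until only $Y$-accessible variables survive. By the observation of the first paragraph these functions see no connective arguments, so the procedure runs verbatim. Should it stall at a universally quantified variable lying in $Y$, the chain of variables it traversed is a linearly ordered set of quantifiers, all superordinated to $y$ and hence (by dense-openness) in $T$, and it satisfies conditions 1--4 of Definition \ref{DEFBSS} — a generalized broken signalling sequence, contradicting the hypothesis. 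The remaining cases ($\eloise$ lacks a winning strategy, or $\abelard$ has or lacks one) are handled as in Theorem \ref{NEC}, using $Strat(\varphi,M)\subseteq Strat(\varphi^{y\leftarrow x},M)$ and the irrelevance of existential slashes to $\abelard$.

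The main obstacle I anticipate is turning the slogan \emph{disjunctions do not matter} into something watertight. Everything rests on the single-quantification convention forcing $(\exists y/Y)$ to sit under exactly one branch of each superordinate connective, so that conditioning on reaching $y$ collapses all connective choices above it. I would isolate this as a short lemma — two histories in a common information set above $(\exists y/Y)$ differ only in their path-variable assignments, never in their connective choices — and invoke it both to justify that the elimination procedure acquires no extra arguments in $\Longrightarrow$ and to guarantee that the off-path fillings of $\Longleftarrow$ cannot be circumvented by a clever disjunction choice. The secondary, purely bookkeeping, obstacle is checking that the extracted sequence genuinely lands inside $T$; this is exactly where dense-openness (downward closure toward the root) is used.
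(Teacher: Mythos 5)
Your proposal is correct and follows essentially the same route as the paper's own proof: for $\Longleftarrow$ you extend $T$ along the chain through the $I\exists\exists$ with the same test formula and fill the off-path branches under conjunctions with validities and under disjunctions with contradictions (the paper uses $\exists u_h(u_h=u_h)$ and $\exists u_k(u_k\neq u_k)$, but your choices work equally well over models with at least two elements), and for $\Longrightarrow$ you rerun the elimination procedure of Theorem \ref{THEONE}, using dense-openness to place the extracted sequence inside $T$. Your explicit lemma that information sets only relate histories at the same tree node (hence with identical connective choices) is left implicit in the paper, but it is the same underlying observation.
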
 
\begin{proof}
$\Longleftarrow$) We define $\varphi$ by extending $T$ to a syntactical tree; this is achieved by extending each maximal chain (linearly ordered subset) $K$ of $T$ with the synctactical tree of some IF formula. If $\hat K$ is the chain which contains the I$\exists\exists$, then extend it with the syntactical tree of formula $(\exists w/W)(v_k=y \land x_1=w \land \dots \land x_n=w)$, $W$ being the set of all variables occurring in $\hat K$. For any other maximal chain $K$, let $c$ be the maximal element of $K\cap \hat K$. If $c$ is a disjunction, extend $K$ with the synctactical tree of a contradiction (e.g. $\exists u_k(u_k\neq u_k)$, where $u_k$ is a new variable); if $c$ is a conjunction, extend $K$ with a validity (e.g. $\exists u_h(u_h = u_h)$). \\
Then the proof follows the sames lines as the proof of the right-to-left part of Theorem \ref{THEONE} (keeping in mind that any choice at disjunctions which leads outside of $\hat K$ is a loss for $\eloise$, and any choice at conjunctions which leads outside of $\hat K$ is a loss for $\abelard$).\\
$\Longrightarrow$) The proof is completely analogous to that of the left-to-right part of theorem \ref{THEONE}. 
\end{proof}

\section{On the fragment of knowledge memory} \label{KMIF}
We described in section \ref{GTS} the game-theoretical notions of perfect recall, knowledge memory and action recall. These  turned out to be invariants of IF formulas, and so they determine some fragments of IF logic, for example:
\begin{df}
We denote as $\text{IF}_{\text{PR}}$ (resp. $\text{IF}_{\text{KM}}$, $\text{IF}_{\text{AR}}$) the fragment of regular IF formulas with perfect recall (resp. knowledge memory, action recall) for both players.
\end{df}
As we already mentioned, it was shown that, for what regards sentences and the notions of truth equivalence and falsity equivalence,  $\text{IF}_{\text{PR}}$ has the same expressive power as first-order logic:
\begin{teo}[Sevenster,\cite{Sev2006}]
Every  $\text{IF}_{\text{PR}}$ sentence $\varphi$ is truth equivalent (resp. falsity equivalent) with a first-order sentence.
\end{teo}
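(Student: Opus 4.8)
The plan is to prove truth-equivalence via the Skolem-function characterization of winning strategies and to show that perfect recall forces the dependency structure of these functions to be \emph{linear}, hence first-order. By the definitions of Section~\ref{IFL}, $\varphi$ is true in $M$ exactly when $\eloise$ has a winning strategy, i.e.\ exactly when there is a system of functions --- one for each existential quantifier and one ``choice function'' for each disjunction occurring in $\varphi$ --- each of which, by the constancy requirement on information sets, depends only on the variables \emph{visible} at its node (the superordinate variables not lying in the relevant slash set). This is a $\Sigma^1_1$ (Henkin-type) condition $\exists(\text{functions})\,\forall(\abelard\text{'s choices})\,[\text{matrix}]$ whose dependency structure is precisely the visibility relation of $\varphi$. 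The whole difficulty is to convert this into an ordinary first-order sentence.

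First I would establish the key structural lemma: for an $\text{IF}_{\text{PR}}$ formula, $\eloise$'s visibility relation is monotone along every maximal chain of the syntactic tree. Concretely, if two of her decision nodes $E \prec E'$ lie on a common branch, then every $\abelard$-node (universal quantifier, or conjunction) visible to $E$ is still visible to $E'$, and, by action recall (no $I\exists\exists$), $E'$ also sees every existential choice made at $E$. This is read off from the syntactic characterizations of action recall and knowledge memory recalled in Section~\ref{GTS}; the disjunction clause of knowledge memory is exactly what keeps the choice functions inside this monotone regime. The consequence is that, restricted to any branch, the sets of $\abelard$-nodes on which $\eloise$'s functions depend form a \emph{chain} under inclusion --- which is precisely the dependency pattern produced by an ordinary, slash-free quantifier sequence.

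Next I would build the first-order sentence $\varphi^\circ$ by recursion on the syntactic tree of $\varphi$: keep every connective in place, and within each maximal block of quantifiers reorder the quantifications into a linear prefix compatible with superordination and with the branchwise-monotone visibility, deleting all slash sets. I would then show that $\eloise$ wins $G(\varphi,M)$ iff she wins the first-order game of $\varphi^\circ$, by the same composition-of-Skolem-functions manoeuvre used in the proof of Theorem~\ref{THEONE}: the dependence of a function on an existential variable is removed by substituting that variable's own defining function (action recall guarantees the needed arguments are still available), while the linearization guarantees that no function ever needs to refer to an $\abelard$-node it cannot see. Since $\varphi^\circ$ is first-order and IF logic is a conservative extension of first-order logic, this yields truth-equivalence.

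Finally, for falsity-equivalence I would run the dual argument on $\abelard$: since $\text{IF}_{\text{PR}}$ demands perfect recall for \emph{both} players, $\abelard$ has action recall (no $I\forall\forall$) and knowledge memory, so the same construction produces a first-order sentence $\chi$ with $\abelard$ winning $G(\varphi,M)$ iff $\abelard$ wins the first-order game of $\chi$, i.e.\ iff $\chi$ is false in $M$; thus $\varphi$ and $\chi$ are falsity-equivalent, and $\chi$ may well differ from $\varphi^\circ$. The main obstacle is the structural lemma together with the composition step: one must verify that perfect recall genuinely blocks every form of signalling from a hidden $\abelard$-node to one of $\eloise$'s later moves --- the $I\forall\exists$ analogue of the phenomenon analysed in Theorem~\ref{THEONE} --- and in particular that disjunctive choices cannot smuggle in forbidden information, which is exactly where the disjunction clause of knowledge memory must be invoked with care.
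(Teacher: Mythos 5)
The first thing to note is that the paper does not prove this statement at all: it is imported verbatim from Sevenster's thesis \cite{Sev2006} (the paper points to \cite{Sev2006} and \cite{ManSanSev2011} for a proof), so there is no in-paper argument to compare yours against. Judged on its own merits, your sketch is essentially the standard argument: perfect recall for $\eloise$ makes her visibility sets along any branch a chain under inclusion, a chain of dependency sets can be realized as the Skolem dependency pattern of an ordinary slash-free quantifier sequence, and action recall lets you compose away references to existential variables exactly as in the proof of Theorem~\ref{THEONE}. Your observation that the truth-side and falsity-side translations may produce \emph{different} first-order sentences is correct and important --- it is precisely why the theorem asserts truth equivalence and falsity equivalence separately rather than strong equivalence.

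Two details deserve more care than your sketch gives them. First, you claim the structural lemma ``is read off from the syntactic characterizations \dots recalled in Section~\ref{GTS},'' but the knowledge-memory condition as recalled there quantifies only over triples of \emph{existential} quantifications, and under action recall (no $I\exists\exists$ at all) that clause is vacuous; the monotonicity you actually need --- that a \emph{universally} quantified $v$ visible at an earlier $\eloise$-node remains visible at later ones --- comes from Sevenster's full characterization, in which the outermost quantifier of the triple may be of either kind. You should cite that version explicitly rather than the paper's paraphrase. Second, ``form a chain under inclusion'' is not yet ``the dependency pattern of a linear prefix'': you must also argue that a chain $D_1\subseteq\dots\subseteq D_n$ of visible sets can be turned into initial segments by permuting the universal quantifiers among themselves, and that this permutation is harmless. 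It is harmless for \emph{truth} (where one may also assume, as the paper does in Section~\ref{KMIF}, that universal slash sets are empty), but it would not in general preserve falsity --- which is fine only because, as you note, the falsity side is handled by a separate dual construction on $\abelard$. With those two points made explicit, the proof goes through and agrees in substance with Sevenster's own.
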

We show here that this result can be extended to the larger fragment  $\text{IF}_{\text{KM}}$. That is, lack of knowledge memory turns out to be essential in order to obtain greater expressive power than that of first-order logic. 
\begin{teo}
Every  $\text{IF}_{\text{KM}}$ sentence $\varphi$ is truth equivalent (resp. falsity equivalent) with a first-order sentence.
\end{teo}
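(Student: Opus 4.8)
The plan is to reduce an arbitrary $\text{IF}_{\text{KM}}$ sentence, stage by stage, to a strongly equivalent sentence containing no relevant $I\exists\exists$ declarations, and then argue that what remains has first-order expressive power. The key observation linking the target theorem to the machinery of Section~\ref{SYNTREE} is the syntactic characterization of knowledge memory recalled in Section~\ref{GTS}: condition~(1) there says that for existential quantifications $(\exists v/V) \prec (\exists w/W) \prec (\exists z/Z)$ one has $v\in Z \Rightarrow v\in W$. I claim this is exactly the condition that forbids \emph{generalized broken signalling sequences} whose intermediate terms are existential. More precisely, if a sentence has knowledge memory, then no subtree of its syntactical tree can contain a generalized broken signalling sequence $((\forall v_k/V_k),(\exists v_{k-1}/V_{k-1}),\dots,(\exists v_1/V_1),(\exists x/X),(\exists y/Y))$: condition~(3) of Definition~\ref{DEFBSS} demands $v_i\notin V_{i-1}$ while condition~(4) demands all $v_i\in Y$, and propagating this through the knowledge-memory inequality forces a contradiction at the universal head $v_k$. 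Establishing this implication precisely is the conceptual heart of the argument.

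\textbf{First I would} make that reduction rigorous. Given an $\text{IF}_{\text{KM}}$ sentence $\varphi$, consider any $I\exists\exists$ occurring in it, say independence of $y$ from $x$. By the claim above together with Theorem~\ref{NONPRENEX}, this declaration is \emph{not} t-relevant in the syntactical tree of $\varphi$ (taking $T$ to be a dense open subset containing the two quantifiers). Hence removing $x$ from the slash set of $y$ produces a sentence with the same truth value on every structure. One must check that such removal preserves membership in $\text{IF}_{\text{KM}}$ and preserves the absence of broken signalling sequences for the \emph{remaining} declarations, so that the process can be iterated. Repeating until no existential-from-existential declaration remains yields a sentence $\varphi'$ with $\varphi \equiv \varphi'$ in which every existential quantifier is slashed only by universal variables (and possibly other existentials that were never relevant, which we have now stripped). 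I would handle falsity equivalence by the dual reasoning, invoking the duality between $I\exists\exists$ and $I\forall\forall$ remarked after Theorem~\ref{NEC}, so that the analogous stripping works for the falsity player.

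\textbf{The hard part will be} closing the gap between ``no relevant existential declarations remain'' and ``the sentence is equivalent to a first-order one.'' Removing the $I\exists\exists$ (and dually $I\forall\forall$) slashes leaves a sentence whose only declarations of independence are of the cross types $I\forall\exists$ and $I\exists\forall$ — that is, a sentence in which no player hides information from \emph{himself}. A sentence in which neither player ever forgets his own earlier moves and whose every slash merely hides the opponent's moves is a sentence of \emph{action recall for both players}, and moreover the knowledge-memory hypothesis was preserved throughout; so $\varphi'$ in fact lies in $\text{IF}_{\text{PR}} = \text{IF}_{\text{AR}}\cap\text{IF}_{\text{KM}}$. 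At that point Sevenster's theorem (\cite{Sev2006}), quoted just above as the perfect-recall case, applies directly: $\varphi'$ is truth equivalent (resp.\ falsity equivalent) to a first-order sentence, and since $\varphi\equiv\varphi'$ (resp.\ $\varphi$ and $\varphi'$ are falsity equivalent) the same first-order sentence works for $\varphi$.

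\textbf{The main obstacle I anticipate} is the bookkeeping in the iteration: one must verify that stripping one irrelevant slash does not create a new broken signalling sequence elsewhere, nor destroy knowledge memory, so that the invariant ``$\varphi$ is $\text{IF}_{\text{KM}}$ and the current declaration is irrelevant'' is genuinely maintained at every stage. Since the non-prenex characterization (Theorem~\ref{NONPRENEX}) is stated for the tree as it stands, and removing a variable from one slash set only \emph{enlarges} the set of strategies available to the verifier without altering any other quantifier's slash, I expect monotonicity to make each step safe; but this is precisely the place where a careful induction on the number of $I\exists\exists$ (and $I\forall\forall$) occurrences, rather than a hand-wave, will be required.
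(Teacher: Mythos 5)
Your proposal follows the same overall route as the paper: observe that knowledge memory excludes (generalized) broken signalling sequences, iteratively strip $I\exists\exists$ declarations using Theorem \ref{NONPRENEX}, land in $\text{IF}_{\text{PR}}$, and finish with Sevenster's theorem. The gap lies exactly at the step you flag as the main obstacle, and your proposed way around it does not work. You strip an \emph{arbitrary} $I\exists\exists$ and claim the invariant to be maintained is ``the formula is still in $\text{IF}_{\text{KM}}$,'' hoping that monotonicity (``removing a variable from a slash set only enlarges the verifier's strategy set'') makes each step safe. This is false: the knowledge-memory condition has slash-set membership in the \emph{conclusion} of an implication ($v\in Z$ implies $v\in W$), so shrinking the middle slash set can destroy it. Concretely, $\exists x(\exists y/\{x\})(\exists z/\{x\})\,\psi$ is in $\text{IF}_{\text{KM}}$, and its $I\exists\exists$ of $y$ from $x$ is irrelevant (no universal quantifiers, hence no broken signalling sequence), but removing $x$ from the slash set of $y$ yields $\exists x\,\exists y(\exists z/\{x\})\,\psi$, in which the triple $\exists x,\exists y,(\exists z/\{x\})$ violates the characterization of knowledge memory. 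So under an arbitrary removal order your invariant fails at the very first step, and the ``careful induction'' you defer to has no correct induction hypothesis to run on.

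The paper's proof supplies precisely the device you are missing: among all witnesses $(\exists x/X)\prec(\exists y/Y)$, $x\in Y$, of the failure of action recall, it removes the one in which $(\exists y/Y)$ occurs at \emph{maximal depth}. Then knowledge memory \emph{is} preserved: if after the removal some $(\exists z/Z)\succ(\exists y/(Y\setminus\{x\}))$ with $x\in Z$ witnessed its failure, the pair $(\exists x/X),(\exists z/Z)$ would have been a deeper action-recall witness in the original formula, contradicting maximality. (In the example above, the deepest-first order removes $x$ from the slash set of $z$ first, and nothing goes wrong.) An alternative repair, closer to your plan, would be to weaken the invariant to ``the formula contains no broken signalling sequence,'' which is in fact preserved under removals in any order; but that itself needs an argument (any broken signalling sequence created by the removal must pass through $(\exists x/X),(\exists y/(Y\setminus\{x\}))$ as consecutive links, and truncating it just below $(\exists x/X)$ exhibits a broken signalling sequence already present before the removal), and you give neither this argument nor the maximal-depth one. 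Relatedly, your final step asserts that ``the knowledge-memory hypothesis was preserved throughout,'' which is exactly what has not been established; the paper never needs that global claim, because its ordering keeps $\text{IF}_{\text{KM}}$ membership true stage by stage and stops as soon as action recall is restored.
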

\begin{proof}
Let $\varphi\in \text{IF}_{\text{KM}}$. Without loss of generality, we may suppose $\varphi$ to be in negation normal form, since the transformations needed for translating a formula into negation normal form preserve knowledge memory. While dealing with truth equivalence, we can also assume that the slash sets of universal quantifiers of $\varphi$ are empty. \\
Suppose first that $\varphi \in\text{IF}_{\text{AR}}$; then $\varphi\in\text{IF}_{\text{PR}}$, and the claim holds thanks to Sevenster's theorem.\\
Suppose instead $\varphi\notin\text{IF}_{\text{AR}}$. Let $(\exists x/X)$ and $(\exists y/Y)$ be quantifications occurring in $\varphi$ that are witness of the failure of action recall (i.e., $(\exists x/X)\prec(\exists y/Y)$ and $x\in Y$) and such that $(\exists y/Y)$ occurs with maximal depth in $\varphi$. Since $\varphi\in\text{IF}_{\text{KM}}$, it cannot contain any broken signalling sequence, in particular no broken signalling sequence ending in $(\exists x/X),(\exists y/Y)$. So, by Theorem \ref{NONPRENEX} $\varphi$ is truth equivalent to $\varphi^{y\leftarrow x}$ (here we used the fact that $\varphi$ is in negation normal form). And $\varphi^{y\leftarrow x}\in \text{IF}_{\text{KM}}$, thanks to the maximal depth of the occurrence of $(\exists y/Y)$ (suppose there is $(\exists z/Z) \succ (\exists y/(Y\setminus\{x\})$ such that $x \in Z$; then, the couple $(\exists x/X),(\exists z/Z)$ is a witness of the failure of action recall in $\varphi$, and $(\exists z/Z)$ occurs in $\varphi$ with greater depth than $(\exists y/Y)$ ).
So, we can iterate this elimination process until we remove all witnesses of failure of action recall, and obtain a formula $\varphi^0\in\text{IF}_{\text{PR}}$ which is truth equivalent to $\varphi$. But $\varphi^0$ is truth equivalent to some first-order formula due to Sevenster's theorem.\\
The case of falsity equivalence is proved using the dual of Theorem \ref{NONPRENEX}.
\end{proof}

\section{Final remarks}
We have given a characterization of the behaviour of existential declarations of independence in IF quantifier prefixes; we extended the result also to the more general framework of equilibrium semantics. We have formally proved that the $I\exists\exists$ are relevant precisely when they interrupt some flow of information from a universal quantification to an existential quantification (a flow which does not explicitly ''leak'' information - this is condition 4 in the definition of ''broken signalling sequence''). The characterization is syntactical and thus effective.
Dual results can be derived for $I\forall\forall$ declarations of independence.

%We have not addressed the analogous problem for prefixes containing both quantifiers and connectives (which make sense %in suitable generalizations of IF logic, which were suggested in \cite{Hod97}). It does not seem unlikely that analogue %results can be derived if we allow disjunctions  to occur in place of some existential quantifications in the definition of broken %signalling sequence.

Due to its effectiveness, the main theorem can be seen as a schema of equivalence rules
\[
\frac{\varphi}{\varphi^{y\leftarrow x}} (\text{provided that independence of $y$ from $x$ is irrelevant})
\]
and thus may be seen as a small contribution to the understanding of the proof theory of IF logic.
As a corollary, we obtained a game-theoretical proof of the fact that declarations of independence are meaningless in existential sentences, i.e., existential sentences are essentially first-order. Similar results were drawn for more general classes of IF sentences.

Through a mild generalization of the main theorem, we managed to prove that the knowledge memory fragment of IF logic has essentially first-order expressive power, thus strenghtening a previous result of Sevenster on the perfect recall fragment.

As a last observation, we repeat that our notion of relevance is weak: it manifests itself as existence of both a model and a sentence on which truth is influenced by removal of a declaration of independence. What happens if we fix a model and look at which sentences' truth values are influenced by removal of declarations of independence? Corollary \ref{MODELRELEV} provides an answer. We wonder, instead, what happens if we fix a formula and look at its possible models; it is our opinion that the problem is difficult to address, due to the complexity of signalling phenomena. Also, we have not investigated any notion of relevance for IF (open) formulas and their ''team semantics''.

\paragraph{Acknowledgements.}
I am grateful to Allen L.Mann, whose seminar at ESSLLI 2011 made me aware of the problem which is studied in this paper. He also provided helpful comments on some parts of the paper. Also, I am really indebted to Gabriel Sandu, who encouraged my work on IF logic and supported my visit to the University of Helsinki. He also read the paper and suggested some significant simplifications in proofs.

An intellectual debt is due to Theo M.V. Janssen, whose papers on IF logic were a source of inspiration for the present work.

\end{document}